\documentclass[11pt]{article}
\usepackage{amsfonts,amssymb,amsmath,amsthm,epsfig,euscript,epstopdf}
\usepackage[hidelinks]{hyperref}
\usepackage[noadjust]{cite}
\setlength{\textwidth}{6.3in}
\setlength{\textheight}{8.7in}
\setlength{\topmargin}{0pt}
\setlength{\headsep}{0pt}
\setlength{\headheight}{0pt}
\setlength{\oddsidemargin}{0pt}
\setlength{\evensidemargin}{0pt}

\newtheorem{theorem}{Theorem}
\newtheorem{lemma}[theorem]{Lemma}

\newtheorem*{thm}{Theorem}

\long\def\symbolfootnote[#1]#2{\begingroup
\def\thefootnote{\fnsymbol{footnote}}\footnote[#1]{#2}\endgroup}






\newcommand{\la}{\lambda}
\newcommand{\La}{\Lambda}

\newcommand{\des}{\mathrm{des}}

\newcommand{\LRmin}[1]{\mathrm{LRmin}(#1)}

\newcommand{\nth}[1][n]{{#1}^{\mathrm{th}}}

\newcommand{\sg}{\sigma}

\newcommand{\cref}[1]{Corollary \ref{corollary:#1}}

\newcommand{\Floor}[1][n/2]{\left \lfloor #1 \right \rfloor}

\newcommand{\red}{\mathrm{red}}

\newcommand{\sgn}[1]{\mathrm{sgn}(#1)}

\newcommand{\tmch}{\text{$\tau$-$\mathrm{mch}$}}

\newcommand{\Gmch}{\text{$\Gamma$-$\mathrm{mch}$}}

\newcommand{\fig}[2]{\begin{figure}[ht]
\centerline{\scalebox{.66}{\epsfig{file=#1.eps}}}
\caption{#2}
\label{fig:#1}
\end{figure}}

\setlength{\unitlength}{1in}

\vskip.25in

\title{Generating functions for permutations which avoid consecutive patterns with multiple descents.}

\author{
Quang T. Bach \\
\small Department of Mathematics\\[-0.8ex]
\small University of California, San Diego\\[-0.8ex]
\small La Jolla, CA 92093-0112. USA\\[-0.8ex]
\small \texttt{qtbach@ucsd.edu}
\and
\and
Jeffrey B. Remmel \\
\small Department of Mathematics\\[-0.8ex]
\small University of California, San Diego\\[-0.8ex]
\small La Jolla, CA 92093-0112. USA\\[-0.8ex]
\small \texttt{remmel@math.ucsd.edu}
\and
}

\date{\small Submitted: Date 1;  Accepted: Date 2;
 Published: Date 3.\\
\small MR Subject Classifications: 05A15, 05E05 \\
keywords: pattern avoidance, consecutive pattern, permutation, pattern match, descent, left to right minimum, symmetric polynomial, exponential generating function}

\begin{document}

\maketitle

\section{Introduction}

Let $S_n$ denote the group all permutations of $n$. That is,
$S_n$ is the set of all one-to-one maps $\sg:\{1, \ldots, n\} \rightarrow 
\{1, \ldots, n\}$ under composition.  If $\sg = \sg_1 \ldots \sg_n \in S_n$, 
then we let $Des(\sg) = \{i: \sg_i >\sg_{i+1}\}$ and 
$\des(\sg) =|Des(\sg)|$. We say that $\sg_j$ is 
left-to-right minima of $\sg$ if $\sg_i > \sg_j$ for all $i < j$. 
For example the \emph{left-to-right minima} of $\sigma=938471625$ are 
$9$, $3$ and $1$. 
Given a sequence $\tau = \tau_1 \cdots \tau_n$ of distinct positive integers,
we define the \emph{reduction} of $\tau$, $\red(\tau)$, to be the permutation 
of $S_n$ that results by replacing the $i$-th smallest element 
of $\tau$ by $i$. For example 
$\red(53962) = 32541$. If $\Gamma$ is a set of permutations, 
we say that a permutation $\sg = \sg_1 \ldots \sg_n \in S_n$ has 
a $\Gamma$-match starting at position $i$ if there is a $i < j$ such 
that $\red(\sg_i \sg_{i+1} \ldots \sg_j) \in \Gamma$. 
We let $\Gmch(\sg)$ denote the number of $\Gamma$-matches in $\sg$.
We let $\mathcal{NM}_n(\Gamma)$ be the set of $\sg \in S_n$ such 
that $\Gmch(\sg) =0$. 

The main goal of this paper is to study generating functions 
of the form 
\begin{equation}\label{eq:I1}
\mbox{NM}_{\Gamma}(t,x,y)=\sum_{n \geq 0} \frac{t^n}{n!} 
\mbox{NM}_{\Gamma,n}(x,y)
\end{equation}
where $\displaystyle \mbox{NM}_{\Gamma,n}(x,y) =\sum_{\sg \in \mathcal{NM}_n(\Gamma)}x^{\LRmin{\sg}}y^{1+\des(\sg)}.$  
In the special case where $\Gamma = \{\tau\}$ is a set with a 
single permutation $\tau$, we shall write $\tmch(\sg)$ for 
$\Gmch(\sg)$, $\mbox{NM}_{\tau}(t,x,y)$ for $\mbox{NM}_{\Gamma}(t,x,y)$,  
and  $\mbox{NM}_{\tau,n}(x,y)$ for $\mbox{NM}_{\Gamma,n}(x,y)$.

There is a considerable literature on the 
generating function $\mbox{NM}_{\Gamma}(t,1,1)$ of 
permutations that consecutively avoid a pattern 
or set of patterns. See for example, \cite{AAM,B1,B2,DK,DR,EN,EN2,EKP,GJ, Kit1,Kitbook,MenRem}. For the most part, these papers do not  
consider generating functions of the form 
$\mbox{NM}{\tau}(t,1,y)$ or $\mbox{NM}{\tau}(t,x,y)$. An exception 
is the work on enumeration schemes of Baxter \cite{B1,B2} who gave 
general methods to enumerate pattern avoiding vincular patterns according 
to various permutations statistics. Our approach is 
to use the reciprocity method of Jones and Remmel.

Jones and Remmel \cite{JR,JR2,JR3} 
developed what they called the reciprocity method 
to compute the generating function 
$\mbox{NM}{\tau}(t,x,y)$ for certain families of 
permutations $\tau$ such that $\tau$ starts with 
1 and $\des(\tau) =1$.

The basic idea of their approach 
is as follows. First it follows from results in \cite{JR} that if 
all the permutations in $\Gamma$ start with 1, then we 
can write $\mbox{NM}_{\Gamma}(t,x,y)$ in the form 
\begin{equation}\label{eq:I2}
\mbox{NM}_{\Gamma}(t,x,y) = \left( \frac{1}{U_{\Gamma}(t,y)}\right)^x
\end{equation}
 where $\displaystyle 
U_{\Gamma}(t,y) = \sum_{n\geq 0}U_{\Gamma,n}(y) \frac{t^n}{n!}$.  
Next one writes 
\begin{equation}\label{eq:I3}
U_{\tau}(t,y) = 
\frac{1}{1+\sum_{n \geq 1} \mbox{NM}_{\tau,n}(1,y) \frac{t^n}{n!}}.
\end{equation} One can then use the homomorphism method 
to give a combinatorial   
interpretation of the right-hand side of (\ref{eq:I3}) which can 
be used to find simple recursions for  
the coefficients $U_{\tau,n}(y)$. 
The homomorphism method derives generating functions for 
various permutation statistics by 
applying a ring homomorphism defined on the 
ring of symmetric functions \begin{math}\Lambda\end{math}  
in infinitely many variables \begin{math}x_1,x_2, \ldots \end{math} 
to simple symmetric function identities such as 
\begin{equation}\label{conclusion2}
H(t) = 1/E(-t)
\end{equation}
where $H(t)$ and $E(t)$ are the generating functions for the homogeneous and elementary 
symmetric functions, respectively. That is, 
\begin{equation}\label{genfns}
H(t) = \sum_{n\geq 0} h_n t^n = \prod_{i\geq 1} \frac{1}{1-x_it} 
\ \mbox{and} \ E(t) = \sum_{n\geq 0} e_n t^n = \prod_{i\geq 1} 1+x_it.
\end{equation}
In their case, Jones and Remmel
 defined a homomorphism \begin{math}\theta_{\tau}\end{math} on 
\begin{math}\Lambda\end{math} by setting 
\begin{displaymath}\theta_{\tau}(e_n) = \frac{(-1)^n}{n!} \mbox{NM}_{\tau,n}(1,y).\end{displaymath}
Then 
\begin{displaymath}\theta_{\tau}(E(-t)) = {\sum_{n\geq 0} \mbox{NM}_{\tau,n}(1,y) \frac{t^n}{n!}} = \frac{1}{U_\tau(t,y)}.\end{displaymath}
Hence 
$$U_\tau(t,y) = \frac{1}{\theta_{\tau}(E(-t))} = \theta_{\tau}(H(t))$$
which implies that 
\begin{equation}\label{eq:combhn}
n!\theta_{\tau}(h_n) = U_{\tau,n}(y).
\end{equation}
Thus if we can compute $n!\theta_{\tau}(h_n)$ for all $n \geq 1$, then we can 
compute the polynomials $U_{\tau,n}(y)$ and the generating function 
$U_{\tau}(t,y)$, which in turn allows us to compute 
the generating function $\mbox{NM}_{\tau}(t,x,y)$. 
Jones and Remmel \cite{JR2,JR3} showed that one can interpret 
$n!\theta_{\tau}(h_n)$ as a certain signed sum of weights of filled labeled brick tabloids when $\tau$ starts with 1 
and $\des(\tau)=1$. They then defined a weight-preserving sign-reversing 
involution $I$ on the set of such filled labeled brick tabloids which 
allowed them to give a relatively simple combinatorial interpretation 
for $n!\theta_{\tau}(n_n)$. They also showed how such a  
combinatorial interpretation allowed them to prove  
that the polynomials $U_{\tau,n}(y)$ satisfy simple recursions
for certain families of such permutations $\tau$. 

For example, in \cite{JR2}, Jones and Remmel  studied  the generating functions $\mbox{NM}_{\tau}(t,x,y)$ for permutations $\tau$ of the form $\tau = 1324\cdots p$ where $p \geq 4$.  Using the reciprocity method, they proved that $U_{1324,1}(y)=-y$ and for $n \geq 2$, 
\begin{equation}\label{1324}
U_{1324,n}(y) = (1-y)U_{1324,n-1}(y)+ \sum_{k=2}^{\lfloor n/2 \rfloor} 
(-y)^{k-1} C_{k-1} U_{1324,n-2k+1}(y)
\end{equation} where $C_k = \frac{1}{k+1}\binom{2k}{k}$ is the $k$-th Catalan number. They also proved that for  any $p \geq 5$,  $U_{1324 \cdots p,n}(y) =-y$ and for $n \geq 2$, 
\begin{equation}\label{1324p}
U_{1324\cdots p,n}(y)=(1-y)U_{1324\cdots p,n-1}(y)+\sum_{k=2}^{\lfloor\frac{n-2}{p-2}\rfloor+1}(-y)^{k-1}U_{1324\cdots p,n-((k-1)(p-2)+1)}(y).
\end{equation}

Bach and Remmel \cite{BR} extended this reciprocity method to 
study the polynomials 
$U_{\Gamma,n}(y)$ in the case where $\Gamma$ is a set of permutations 
such that for all $\tau \in \Gamma$, $\tau$ starts with 1 and 
$\des(\tau) \leq 1$. For example, 
suppose that $k_1, k_2 \geq 2$, $p = k_1 + k_2$, 
and  
$$\Gamma_{k_1,k_2} = \{\sigma \in S_p: \sigma_1=1, \sigma_{k_1+1}=2, \sigma_1 < \sigma_2< \cdots<\sigma_{k_1}~ \&~\sigma_{k_1+1} < \sigma_{k_1+2}< \cdots<\sigma_{p} \}.$$ 
That is, $\Gamma_{k_1,k_2}$ consists of all permutations $\sg$ of length $p$ where 1 is in position 1, 2 is in position $k_1+1$, and $\sg$ consists of two increasing sequences, one starting at 1 and the other starting at 2. In \cite{BR}, 
we proved that for $\Gamma = \Gamma_{k_1,k_2}$, 
$U_{\Gamma,1}(y)=-y$, and for $n \geq 2,$
\begin{align*}
\displaystyle   U_{\Gamma,n}(y) &= (1-y)U_{\Gamma,n-1}(y) -y\binom{n-2}{k_1-1}\left( U_{\Gamma,n-M}(y) +y\sum_{i=1}^{m-1}U_{\Gamma,n-M-i}(y) \right) 
\end{align*} where $m = \min\{k_1, k_2\}$, and $M = \max\{k_1,k_2\}$. 

Furthermore, in \cite{BR}, we investigated a new phenomenon that arises when we add the identity permutation $12 \ldots k$ to the family $\Gamma$. 
 For example, if $\Gamma = \{1324,123\}$, then we proved 
that  $U_{\Gamma,1}(y)=-y$, and for $n \geq 2,$
\begin{equation}\label{eq:1324,123}
U_{\Gamma,n}(y) = -yU_{\Gamma,n-1}(y) -yU_{\Gamma,n-2}(y)
+ \sum_{k=2}^{\Floor }(-y)^{k}C_{k-1}U_{\Gamma, n-2k}(y).
\end{equation} 
When $\Gamma = \{1324\dots p,123\dots p-1\}$ where $p \geq 5,$ then 
we proved that $U_{\Gamma,1}(y)=-y$, and for $n \geq 2,$
\begin{equation}\label{1324p,12p}
U_{\Gamma,n}(y) = \sum_{k=1}^{p-2}(-y)U_{\Gamma,n-k}(y) + \sum_{k=1}^{p-2}\sum_{m=2}^{\lfloor\frac{n-k}{p-2}\rfloor }(-y)^{m}U_{\Gamma, n-k-(m-1)(p-2)}(y). 
\end{equation} 
While on the surface, 
the recursions (\ref{eq:1324,123}) and (\ref{1324p,12p}) do not 
seem to be simpler than the corresponding recursions 
(\ref{1324}) and (\ref{1324p}), they are easier to analyze because 
adding an identity permutation $12 \ldots k$ to $\Gamma$ ensures 
that all the bricks in the filled brick tabloids used to interpret 
$n!\theta_{\tau}(h_n)$ have length less than $k$. For 
example, we were able to prove the following explicit formula 
for the polynomials $U_{\{1324,123\},n}(y)$. 

\begin{theorem} \label{1324,123}
Let $\Gamma = \{1324,123\}$. Then for all 
$n \geq 0$, 
\begin{equation}\label{u2n}
U_{\Gamma,2n}(y) = 
\sum_{k=0}^n \frac{(2k+1)\binom{2n}{n-k}}{n+k+1}(-y)^{n+k+1}
\end{equation}
and 
\begin{equation}\label{u2n+1}
U_{\Gamma,2n+1}(y) = 
\sum_{k=0}^n \frac{2(k+1)\binom{2n+1}{n-k}}{n+k+2}(-y)^{n+k}.
\end{equation}
\end{theorem}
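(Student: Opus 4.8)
The plan is to derive the closed forms \eqref{u2n} and \eqref{u2n+1} from the recursion \eqref{eq:1324,123} using generating functions, reading them off as the even‑ and odd‑indexed coefficients of a single algebraic series. The key preliminary observation is that the coefficients on the right‑hand sides are ballot numbers, hence coefficients of powers of the Catalan generating function $C(z)=\sum_{j\ge 0}C_{j}z^{j}=\frac{1-\sqrt{1-4z}}{2z}$. Indeed, since $\binom{2n}{n-k}=\binom{2n}{n+k}$ one has the elementary reductions
\[
\frac{(2k+1)\binom{2n}{n-k}}{n+k+1}=\frac{2k+1}{2n+1}\binom{2n+1}{n-k},
\qquad
\frac{2(k+1)\binom{2n+1}{n-k}}{n+k+2}=\frac{2(k+1)}{2n+2}\binom{2n+2}{n-k},
\]
and by the classical coefficient formula $[z^{m}]C(z)^{j}=\frac{j}{2m+j}\binom{2m+j}{m}$ for powers of $C(z)$, these equal $[z^{n-k}]C(z)^{2k+1}$ and $[z^{n-k}]C(z)^{2k+2}$ respectively. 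Thus \eqref{u2n}--\eqref{u2n+1} are equivalent to a single clean assertion about $F(t):=\sum_{m\ge 0}U_{\Gamma,m}(y)\,t^{m}$ once one expands powers of $C(-yt^{2})$.

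First I would convert \eqref{eq:1324,123} into a functional equation for $F(t)$: multiplying by $t^{m}$ and summing over $m\ge 2$, the term $-yU_{\Gamma,m-1}$ contributes $-yt\,(F-U_{\Gamma,0})$, the term $-yU_{\Gamma,m-2}$ contributes $-yt^{2}F$, and the Catalan sum contributes $F\sum_{k\ge 2}C_{k-1}(-yt^{2})^{k}=F\big(-yt^{2}C(-yt^{2})+yt^{2}\big)$, using $\sum_{k\ge 1}C_{k-1}w^{k}=wC(w)$ with $w=-yt^{2}$. The $+yt^{2}F$ produced by the $k=1$ correction cancels the $-yt^{2}F$, and after inserting the base values one obtains $F(t)\big(1+yt+yt^{2}C(-yt^{2})\big)$ equal to a linear polynomial in $t$; solving and using $yt^{2}C(-yt^{2})=\tfrac12\big(\sqrt{1+4yt^{2}}-1\big)$ yields an explicit algebraic expression for $F(t)$ (of the shape $2\big/(1+2yt+\sqrt{1+4yt^{2}}\,)$, adjusted by the base data).

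The last step is coefficient extraction. Writing $F(t)$ as $C(-yt^{2})$ times a geometric series in $yt^{2}C(-yt^{2})^{2}$ (legitimate as a formal power series since that quantity has no constant term), the typical summand is a multiple of $(-y)^{a}\,t^{2k}\,C(-yt^{2})^{2k+1}$ (and similarly with $2k+2$ in place of $2k+1$); applying the power‑of‑Catalan formula converts $[t^{2(n-k)}]C(-yt^{2})^{2k+1}$ into $(-y)^{n-k}\frac{2k+1}{2n+1}\binom{2n+1}{n-k}$, and the remaining powers of $-y$ collect into $(-y)^{n+k+1}$, which is exactly a summand of \eqref{u2n}; the odd‑indexed coefficients produce \eqref{u2n+1} in the same way. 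Alternatively, one can avoid generating functions entirely and prove \eqref{u2n}--\eqref{u2n+1} by induction on $n$: substitute the closed forms for the lower‑index terms into the right side of \eqref{eq:1324,123} and collapse the resulting double sum, the needed simplification being precisely the Catalan convolution $\sum_{i\ge 0}C_{i}\,[z^{j-i}]C(z)^{a}=[z^{j}]C(z)^{a+1}$.

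I expect the main obstacle to be exactly this bookkeeping: aligning the powers of $-y$ and the various index shifts so that the Catalan‑convolution (equivalently, power‑of‑Catalan) identity applies verbatim, and separately handling all the boundary data---the small values of $k$ in \eqref{eq:1324,123}, the base cases $U_{\Gamma,0}(y)$ and $U_{\Gamma,1}(y)$, and the lower limits of the ballot‑number sums---so that \eqref{u2n}--\eqref{u2n+1} also hold for small $n$. Once $F(t)$ is pinned down in closed form, everything else is a careful but routine calculation.
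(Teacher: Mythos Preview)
The present paper does not actually prove this theorem; it is quoted in the introduction as a result from \cite{BR}, so there is no in-paper argument to compare against. Your generating-function route is sound and, once carried through, gives a complete proof. With $U_{\Gamma,0}=1$ and $U_{\Gamma,1}=-y$, the recursion \eqref{eq:1324,123} indeed collapses to $F(t)\bigl(1+yt+yt^{2}C(-yt^{2})\bigr)=1$, hence the algebraic form $F(t)=2/\bigl(1+2yt+\sqrt{1+4yt^{2}}\bigr)$ you state. For the extraction step, a cleaner decomposition than ``geometric in $yt^{2}C(-yt^{2})^{2}$'' is obtained by multiplying numerator and denominator by $C:=C(-yt^{2})$ and using the Catalan relation $yt^{2}C^{2}=1-C$, which gives
\[
F(t)=\frac{C}{1+yt\,C}=\sum_{j\ge 0}(-yt)^{j}\,C(-yt^{2})^{\,j+1}.
\]
Splitting into $j=2k$ and $j=2k+1$ and applying $[z^{m}]C(z)^{a}=\frac{a}{2m+a}\binom{2m+a}{m}$ reads off the even- and odd-indexed closed forms immediately, with essentially no index-chasing beyond this.

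One concrete warning about the bookkeeping you flag: carrying the computation out produces exponent $(-y)^{n+k}$ in the even-index formula and $(-y)^{n+k+1}$ in the odd-index one---that is, the exponents in \eqref{u2n} and \eqref{u2n+1} as printed appear to be interchanged. The case $n=0$ already exposes this (the printed versions give $U_{\Gamma,0}(y)=-y$ and $U_{\Gamma,1}(y)=1$ rather than $1$ and $-y$), and $n=1,2$ confirm it. So your argument will in fact establish the corrected statements; be prepared for the ``alignment of powers of $-y$'' not to match the displayed exponents verbatim.
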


Another example in \cite{BR} where we could find 
an explicit formula is the following. Let $\Gamma_{k_1,k_2,s} = \Gamma_{k_1,k_2} \cup \{1 \cdots s(s+1)\}$ for some $s \geq \max(k_1,k_2)$. Bach and Remmel showed that $U_{\Gamma_{2,2,s},1}(y)=-y$, and for $n \geq 2,$
\begin{align} \label{Gamma22s}
U_{\Gamma_{2,2,s},n}(y)&= -yU_{\Gamma_{2,2,s},n-1}(y) - \nonumber\\
&\ \ \ \sum_{k=0}^{s-2} \left((n-k-1) yU_{\Gamma_{2,2,s},n-k-2}(y)+(n-k-2)
y^2 U_{\Gamma_{2,2,s},n-k-3}(y)\right).
\end{align} 
Using these recursions, we proved that 
\begin{align*}
U_{\Gamma_{2,2,2},2n}(y) = & \sum_{i=0}^n (2n-1)\downarrow \downarrow_{n-i} (-y)^{n+i} \ \mbox{~~and} \\
U_{\Gamma_{2,2,2},2n+1}(y) =& \sum_{i=0}^n (2n) \downarrow \downarrow_{n-i} (-y)^{n+1+i}
\end{align*} where for any $x$, $(x)\downarrow \downarrow_{0} =1$ and $(x)\downarrow \downarrow_{k} =x(x-2)(x-4) \cdots (x-2k -2)$ for $k \geq 1$.

The two assumptions on $\Gamma$ that allow the reciprocity  
method to work are that (A) all $\tau$ in $\Gamma$ start with 
1 and (B) all $\tau$ in $\Gamma$ have at most 
one descent. First, assumption (A) ensures that we 
can write $\mbox{NM}_{\Gamma}(x,y,t)$ in the form (\ref{eq:I2}). 
Second, assumption (B) ensures that the involution $I$ used to 
simplify the weighted sum over all filled, labeled brick tabloids 
that equals $n!\theta_{\tau}(h_n)$ is actually an involution and to 
ensure that the elements in any brick of a filled, labeled 
brick tabloids which is a fixed point of $I$ 
must be increasing. Finally, (A) is used again to ensure that the minimal 
elements in bricks of any fixed point of $I$ are 
increasing when read from left to right.

The main goal of this paper is to study how we can 
apply the reciprocity method in the case 
where we no longer insist that all the 
$\tau \in \Gamma$ have at most one descent. We shall 
show  that we can modify the definition of the involution  
used by Jones and Remmel \cite{JR2,JR3} and Bach and Remmel \cite{BR} 
to simplify the weighted sum over all filled, labeled brick tabloids 
that equals $n!\theta_{\tau}(h_n)$.  However, the set of fixed points 
in such cases will be more complicated than 
in the case where $\Gamma$ contains only permutations 
with at most one descent in that it will no longer be the 
case that for fixed points of the involution, 
the fillings will be increasing in bricks and 
the minimal elements of the brick increase, reading from left 
to right.  Nevertheless, we shall show that there 
still are a number of 
cases where we can successfully analyze the fixed points 
to prove that the polynomials $U_{\Gamma,n}(y)$ satisfy some simple recursions.

In this paper, we shall prove three main theorems. 
That is, we will compute the generating functions 
$\mbox{NM}_{\Gamma}(t,x,y)$ when 
$\Gamma = \{14253,15243\}$, $\Gamma = \{142536\}$, 
and when $\Gamma = \{\tau_a\}$ for any 
$a \geq 2$ where $\tau_a \in S_{2a}$ is 
the permutation such that 
$\tau_1 \tau_3 \ldots \tau_{2a-1} = 12 \ldots a$ and 
$\tau_2 \tau_4 \ldots \tau_{2a}= (2a) (2a-1) \ldots (a+1)$. 
In each case, the permutations have at least two descents.  
In \cite{BR2}. we studied the generating functions of the form 
$\mbox{NM}_\tau(t,x,y)$ where $\tau$ is a minimal overlapping 
permutation that starts with 1.  Here $\tau \in S_j$ is a minimal 
overlapping permutation if the smallest $j$ such that 
there exists an $\sg \in S_n$ such that $\tmch(\sg) =2$ is $2j-1$. 
This means that any two consecutive $\tau$-matches can share 
at most one letter. When $\tau$ is a minimally 
overlapping permutations, the recursions for $U_{\tau,n}(y)$ are generally much 
simpler than the ones considered in this paper because in each case 
we are dealing with permutations which are not minimally overlapping.

The main results of this paper are the following theorems. 

\begin {theorem} \label{thm:1-2-3}
Let $\Gamma = \{14253,15243\}$. Then 
$$\mbox{NM}_\Gamma(t,x,y)=\left(\frac1{U_\Gamma(t,y)}\right)^x \text{ where }U_\Gamma(t,y)=1+\sum_{n\geq1}U_{\Gamma,n}(y)\frac{t^n}{n!},$$ with $U_{\Gamma,1}(y)=-y$, and for $n \geq 2,$
\begin{align*}
\displaystyle  U_{\Gamma,n}(y) & = (1-y)U_{\Gamma,n-1}(y)  -y^2(n-3)\left(U_{\Gamma, n-4}(y) +(1-y)(n-5)U_{\Gamma, n-5}(y)  \right)\\
& \qquad \qquad \qquad \qquad \qquad -y^3(n-3)(n-5)(n-6)U_{\Gamma,n-6}(y).
\end{align*}
\end{theorem}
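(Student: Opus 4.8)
The plan is to follow the reciprocity/homomorphism framework laid out in the introduction. Since both permutations $14253$ and $15243$ start with $1$, equation~(\ref{eq:I2}) applies and we may write $\mbox{NM}_\Gamma(t,x,y)=(1/U_\Gamma(t,y))^x$ with $U_\Gamma(t,y)=\theta_\Gamma(H(t))$, where $\theta_\Gamma$ is the ring homomorphism on $\Lambda$ defined by $\theta_\Gamma(e_n)=\frac{(-1)^n}{n!}\mbox{NM}_{\Gamma,n}(1,y)$. By~(\ref{eq:combhn}) it then suffices to compute $U_{\Gamma,n}(y)=n!\,\theta_\Gamma(h_n)$. First I would expand $n!\,\theta_\Gamma(h_n)$ via the brick tabloid expansion of $h_n$ in terms of the $e_\lambda$'s, which yields a signed sum over filled, labeled brick tabloids of shape $(n)$: a brick tabloid is a sequence of bricks of sizes $b_1,\dots,b_k$ summing to $n$, each brick carries a filling by a permutation of its cells and a labeling, with the constraint (coming from the definition of $\theta_\Gamma(e_{b_i})$) that within each brick the contents have no $\Gamma$-match, and across the whole arrangement a factor of $y$ is recorded for each descent plus certain bookkeeping. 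The sign attached to a tabloid with bricks $b_1,\ldots,b_k$ is $(-1)^{n-k}$, and the $y$-weight encodes $y^{1+\des}$ contributions from each brick together with the ``gluing'' sign/weight at brick boundaries; I would recall the precise combinatorial model from \cite{JR2,BR} and specialize it to $\Gamma=\{14253,15243\}$.

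Next I would define a sign-reversing, weight-preserving involution $I$ on this set of filled labeled brick tabloids, modeled on the Jones--Remmel involution but modified to handle permutations with two descents. The involution scans the tabloid from left to right looking for the first place where either (i) two adjacent bricks can be merged, or (ii) a brick can be split, the decision being governed by whether the relevant local pattern creates a forbidden $\Gamma$-match; when $\Gamma$ consisted only of one-descent patterns this produced fixed points whose bricks were filled increasingly with increasing brick-minima, but here the two descents in $14253$ and $15243$ mean the fixed points will be genuine permutations matching neither pattern inside any brick while still possibly having descents. I would then carefully characterize the fixed points of $I$: I expect they are brick tabloids in which each brick is filled by a permutation in $\mathcal{NM}(\Gamma)$ of a very restricted shape — essentially bricks of size $\le 3$ together with isolated ``bad'' configurations of size $4$, $5$, or $6$ whose presence forces specific descent counts, which is exactly what the stated recursion suggests (note the appearance of the shifts $n-1,n-4,n-5,n-6$ and the polynomial coefficients $(n-3)$, $(n-3)(n-5)(n-6)$, etc.).

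From the characterization of fixed points I would set up the recursion: peeling off the leftmost brick of a fixed-point tabloid of size $n$, a brick of size $1$ contributes the $(1-y)U_{\Gamma,n-1}(y)$ term (the $-y$ and $+1$ bookkeeping from a size-one brick against the next), while the exceptional bricks/configurations of sizes $4,5,6$ contribute the $-y^2(n-3)$, $-y^2(n-3)(1-y)(n-5)$, and $-y^3(n-3)(n-5)(n-6)$ terms respectively — the polynomial factors in $n$ arising from the number of ways to insert or label the letters of the exceptional block among the remaining $n-\text{(block size)}$ positions. The base case $U_{\Gamma,1}(y)=-y$ is immediate since $\theta_\Gamma(h_1)=\theta_\Gamma(e_1)=-\mbox{NM}_{\Gamma,1}(1,y)=-y$. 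The main obstacle, I expect, is step two — pinning down exactly which filled labeled brick tabloids survive the modified involution. Because $14253$ and $15243$ are not minimally overlapping, two $\Gamma$-matches can share up to three letters, so a single brick can fail to be ``locally increasing'' in subtle ways; the delicate combinatorial analysis is to show that the only surviving non-unit bricks are the handful of exceptional blocks of sizes $4$, $5$, $6$, to count their labelings correctly (this is where the factors $(n-3)$, $(n-5)$, $(n-6)$ come from), and to verify that the interaction between the two patterns in $\Gamma$ does not produce additional fixed points. Once that enumeration is correct, assembling the recursion is routine bookkeeping.
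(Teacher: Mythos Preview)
Your high-level framework is right --- both patterns start with $1$, so~(\ref{eq:I2}) applies; one expands $n!\,\theta_\Gamma(h_n)$ over filled labeled brick tabloids, applies a sign-reversing involution, and analyzes the fixed points. But your picture of the fixed points and of how the recursion arises is wrong in a way that would block the argument.

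The recursion does \emph{not} come from ``peeling off the leftmost brick,'' with a size-$1$ brick giving $(1-y)U_{\Gamma,n-1}$ and exceptional bricks of sizes $4,5,6$ giving the other terms. In the paper's argument the key structural fact (Lemma~\ref{lem:keyGamma}(c)) is that for $\Gamma=\{14253,15243\}$ the bottoms of the descents are exactly $2$ and $3$; this forces $\mathrm{first}(b_1)<\mathrm{first}(b_2)<\cdots$ in every fixed point, hence $\sigma_1=1$. One then case-splits on the \emph{position of the value $2$}: it must lie in cell~$2$ or cell~$3$. If $\sigma_2=2$, removing cell~$1$ yields the $(1-y)U_{\Gamma,n-1}$ term --- the factor $(1-y)$ records whether $1$ sits alone in $b_1$ (weight $-y$) or shares $b_1$ with $2$. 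If $\sigma_3=2$, then $b_1$ has size exactly $2$, there is a $\Gamma$-match on cells $1$--$5$, and one shows $\sigma_5=3$; a further split on whether a $\Gamma$-match starts at cell~$3$, together with locating the next-smallest remaining value $d$, forces the first $4$, $5$, or $6$ cells to be completely determined up to the free choices of $\sigma_2$, $\sigma_4$, $\sigma_6$. Those free choices are exactly where the factors $(n-3)$, $(n-5)$, $(n-6)$ come from, and the shifts $n-4,n-5,n-6$ are the number of \emph{cells} removed, not brick sizes.

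So the gap is that you have not identified the mechanism driving the recursion: it is value-tracking ($1,2,3,d$) against the constraint that every descent in a fixed point must be covered by a $\Gamma$-match crossing a brick boundary, not a classification of leftmost bricks by size. Without Lemma~\ref{lem:keyGamma}(c) and the resulting ``where can $2$ (then $3$, then $d$) sit?'' analysis, you will not be able to show that only the listed configurations survive, nor will you obtain the correct polynomial coefficients.
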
 

Let $C_n = \frac{1}{n+1}\binom{2n}{n}$ be the $n$-th Catalan number. 
Let $M_n$ be the $n\times n$ matrix whose elements on 
the main diagonal equals $C_2$, whose elements on $j$-th diagonal above 
the main diagonal are $C_{3j+2}$, whose elements on the sub-diagonal 
are $-1$, and whose elements in diagonal below the sub-diagonal are 0. 
Thus, 
\begin{equation*}  M_k = \begin{vmatrix}
C_2 & C_5 & C_8 & C_{11} & \cdots & C_{3k-4} & C_{3k-1} \\ 
-1 & C_2 & C_5 & C_8 & \cdots & C_{3k-7} & C_{3k-4} \\ 
0 & -1 & C_2 & C_5 & \cdots & C_{3k-10} & C_{3k-7} \\ 
0 & 0 & -1 & C_2 & \cdots & C_{3k-13} & C_{3k-10} \\ 
\vdots & \vdots & \vdots & \vdots &  & \vdots & \vdots \\ 
0 & 0 & 0 & 0 & \cdots & C_2 & C_5 \\ 
0 & 0 & 0 & 0 & \cdots & -1 & C_2
\end{vmatrix}.
\end{equation*}
Let $P_k$ be the matrix obtained from $M_k$ by replacing each 
$C_m$ in the last column by $C_{m-1}$. Thus, 
\begin{equation*} P_k = \begin{vmatrix}
C_2 & C_5 & C_8 & C_{11} & \cdots & C_{3k-4} & C_{3k-2} \\ 
-1 & C_2 & C_5 & C_8 & \cdots & C_{3k-7} & C_{3k-5} \\ 
0 & -1 & C_2 & C_5 & \cdots & C_{3k-10} & C_{3k-8} \\ 
0 & 0 & -1 & C_2 & \cdots & C_{3k-13} & C_{3k-11} \\ 
\vdots & \vdots & \vdots & \vdots &  & \vdots & \vdots \\ 
0 & 0 & 0 & 0 & \cdots & C_2 & C_4 \\ 
0 & 0 & 0 & 0 & \cdots & -1 & C_1
\end{vmatrix}.
\end{equation*}

\begin {theorem} \label{thm:142536}
Let $\tau = 142536$. Then 
$$\mbox{NM}_\tau(t,x,y)=\left(\frac1{U_\tau(t,y)}\right)^x \text{ where }U_\tau(t,y)=1+\sum_{n\geq1}U_{\tau,n}(y)\frac{t^n}{n!},$$ with $U_{\tau,1}(y)=-y$, and for $n \geq 2,$
\begin{align*}
\displaystyle U_{\tau,n}(y) = &~~~  (1-y)U_{\Gamma,n-1}(y) +  \sum_{k=0}^{\Floor[(n-8)/6]} 
\mathrm{det}(M_{k+1}) y^{3k+3}U_{n-6k-7}(y) \\
& ~~~\qquad  + \sum_{k=0}^{\Floor[n-6/6]} \mathrm{det}(P_{k+1})(-y^{3k+2})
\left[U_{\tau,n-6k-4}(y) +yU_{\tau,n-6k-5}(y) \right].
\end{align*}
\end{theorem}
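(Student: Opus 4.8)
\textbf{Proof proposal for Theorem \ref{thm:142536}.}

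The plan is to follow the reciprocity method exactly as set up in the introduction: since $\tau = 142536$ starts with $1$, by \eref{I2} we may write $\mbox{NM}_\tau(t,x,y) = (1/U_\tau(t,y))^x$, and by \eref{combhn} we have $U_{\tau,n}(y) = n!\,\theta_\tau(h_n)$, where $\theta_\tau(e_n) = \frac{(-1)^n}{n!}\mbox{NM}_{\tau,n}(1,y)$. The first step is to expand $n!\,\theta_\tau(h_n)$ as a signed sum over filled, labeled brick tabloids, using the standard combinatorial expansion of $h_n$ in terms of the $e_\lambda$ via brick tabloids; the $y$-weight on each brick records one plus the number of descents contributed within it, and each brick $b$ of length $\ell$ carries a factor coming from $\mbox{NM}_{\tau,\ell}(1,y)$, which we interpret as a sum over fillings of $b$ by a permutation in $\mathcal{NM}_\ell(\tau)$ together with a choice of which cell of $b$ is ``labeled''. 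This is the same bookkeeping as in \cite{JR2,JR3,BR}; the only new feature is that $\tau$ has two descents (at positions $2$ and $4$) and is \emph{not} minimally overlapping, since two $\tau$-matches in a word can overlap in three letters (the pattern $1425\underline{36}$ overlapping with $\underline{14}2536$ read off the shared $36 \leftrightarrow 14$ block).

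Next I would define the sign-reversing, weight-preserving involution $I$ on these tabloids, adapting the Jones--Remmel involution: scan the cells left to right looking for the first place where either two consecutive bricks can be merged (their combined filling still avoids $\tau$, in which case we merge and move the label appropriately) or a brick can be split at a descent (reversing the merge). The subtlety flagged in the introduction is that for fixed points of $I$ the filling need \emph{not} be increasing within a brick and the brick minima need not increase left to right; instead, a fixed point brick of length $\ell \geq 4$ will consist of one or more ``$\tau$-blocks'' glued along three-letter overlaps, forcing $\ell \in \{1\} \cup \{6k+4 : k \geq 0\} \cup \{6k+7: k\geq 0\}$ roughly (a single descent-free brick of length $1$, a chain of $k+1$ copies of $142536$ overlapping in triples giving length $6(k+1)-3k$... ) — I would carefully enumerate the admissible fixed-point brick shapes and count, for each length, the number of legal fillings. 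This counting is exactly where the Catalan numbers $C_{3j+2}$ and the determinants $\det(M_{k+1})$, $\det(P_{k+1})$ enter: a chain of overlapping $142536$-patterns of a given length has its fillings counted by a ballot-type / transfer-matrix argument whose generating function is the determinant of the banded Toeplitz-like matrix $M_k$ (respectively $P_k$ when the last block is ``short'' because the labeled cell sits in it), by a Lindström--Gessel--Viennot or direct recursive expansion. I expect the identification of these determinants with the fixed-point counts to be the main obstacle — one must show that the recursion satisfied by the number of fillings of a length-$(6k+7)$ chain (obtained by conditioning on where the first overlap occurs) is precisely the cofactor expansion of $\det(M_{k+1})$ along its first row, and similarly for $P$.

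Finally, once the fixed-point generating function is known brick-by-brick, I would assemble $U_\tau(t,y) = \theta_\tau(H(t))$ from the fixed points, or equivalently read off the recursion for $U_{\tau,n}(y)$ directly from the ``last brick'' decomposition of a fixed point tabloid of size $n$: the term $(1-y)U_{\tau,n-1}(y)$ comes from a final brick of length $1$ or $2$ (the length-$2$ case carrying the $-y$ and the label, as in the one-descent theory), the sum $\sum_k \det(M_{k+1})\,y^{3k+3}U_{\tau,n-6k-7}(y)$ from a final fixed-point brick of length $6k+7$ (a chain of $k+1$ overlapping copies of $\tau$, contributing $\det(M_{k+1})$ fillings, sign $(-1)^{\text{something}}$ absorbed into $y^{3k+3}$ since each copy contributes $3$ to the descent count... wait, each $142536$ has $2$ descents so $k+1$ overlapping copies have $2(k+1)+$ overlap corrections $= 3k+3$ descents, matching the exponent), and the last sum from a final brick of length $6k+4$ or $6k+5$ (the two cases giving the $U_{\tau,n-6k-4}+yU_{\tau,n-6k-5}$ combination, with $\det(P_{k+1})$ fillings because the labeled cell forces the terminal block to be truncated). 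Establishing $U_{\tau,1}(y) = -y$ is immediate since $\mathcal{NM}_1(\tau) = S_1$ and the unique permutation of length $1$ has $\des = 0$, giving $U_{\tau,1}(y) = 1!\,\theta_\tau(h_1) = 1!\,\theta_\tau(e_1) = -\mbox{NM}_{\tau,1}(1,y) = -y$. I would double-check the small cases $n = 7,8,\ldots$ against a direct computation of $\mbox{NM}_{\tau,n}(1,y)$ to confirm the signs and the indexing of the determinants.
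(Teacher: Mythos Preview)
Your overall plan---reciprocity, filled labeled brick tabloids, sign-reversing involution, then analyze fixed points---is the right framework and matches the paper.  But the structural analysis of the fixed points is off in two essential ways, and these would cause the argument to fail.

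First, the overlap structure and the brick lengths.  Two $\tau=142536$ matches can share \emph{four} letters, not three: a match at positions $1,\ldots,6$ forces $\sigma_1<\sigma_3<\sigma_5<\sigma_2<\sigma_4<\sigma_6$, and a match at positions $3,\ldots,8$ forces $\sigma_3<\sigma_5<\sigma_7<\sigma_4<\sigma_6<\sigma_8$, which are compatible.  More importantly, the individual bricks in a fixed point are \emph{not} long chains of length $6k+4$ or $6k+7$.  By Lemma~\ref{lem:keyGamma}(b), since $\des(\tau)=2$, each brick in a fixed point can carry at most one internal descent.  What actually happens (and this is the heart of the paper's analysis) is that when $\sigma_3=2$ the initial segment of a fixed point is forced to have brick structure $b_1=2,\ b_2=4,\ b_3=2,\ b_4=4,\ldots$, with $\tau$-matches straddling consecutive bricks.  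The quantities $6k+7$ and $6k+4$ are the total lengths of these initial multi-brick segments, not the length of a single brick.

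Second, the decomposition is by the \emph{initial} segment of the fixed point, not by the last brick.  One shows that $\sigma_1=1$ and $\sigma_2\in\{2,3\}$; the case $\sigma_2=2$ gives the $(1-y)U_{\tau,n-1}(y)$ term by stripping off cell~1.  When $\sigma_3=2$, one tracks the chain of forced $\tau$-matches starting at positions $1,3,7,9,13,\ldots$ and stops at the first ``missing'' match; depending on whether the chain stops after a match at $6k+3$ (Case~2.1) or at $6k+1$ (Case~2.2), one removes an initial block of $6k+7$ or $6k+4$ cells and argues that what remains is an arbitrary smaller fixed point.  The counts $\det(M_{k+1})$ and $\det(P_{k+1})$ are the numbers of \emph{linear extensions of the posets} on the removed cells (the Hasse diagrams coming from the overlap picture), and the identification with the determinants is by showing both satisfy the same recursion $A_{k+1}=\sum_{j=0}^{k}C_{2+3j}A_{k-j}$ (respectively the $P$-recursion), which on the determinant side is cofactor expansion along the first row.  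Your instinct that the Catalan numbers and a transfer-matrix/cofactor identity are involved is correct, but the objects being counted are linear extensions, not ``fillings of a single long brick'', and a last-brick decomposition will not produce the recursion in the stated form.
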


\begin {theorem} \label{thm:162534}
For any $n \geq 2$, let $\tau=\tau_1 \ldots \tau_{2a} \in S_{2a}$ where 
$\tau_1 \tau_3 \ldots \tau_{2a-1} = 123 \ldots a$ and $\tau_{2} \tau_4 \ldots 
\tau_{2a} = (2a) (2a-1) \ldots (a+1)$.
Then 
$$\mbox{NM}_\tau(t,x,y)=\left(\frac1{U_\tau(t,y)}\right)^x \text{ where }U_\tau(t,y)=1+\sum_{n\geq1}U_{\tau,n}(y)\frac{t^n}{n!},$$ with $U_{\tau,1}(y)=-y$, and for $n \geq 2,$
\begin{align*}
U_{\tau,n}(y)  = & ~  (1-y)U_{\tau,n-1}(y)  - 
\sum_{k=0}^{\lfloor (n-2a)/(2a)\rfloor} \binom{n-(k+1)a-1}{(k+1)a-1} y^{(k+1)a-1}
U_{\tau_a,n-(2(k+1)a)+1}(y) \\
& \qquad \quad +\sum_{k=0}^{\lfloor (n-2a-2)/(2a)\rfloor} \binom{n-(k+1)a-2}{(k+1)a} 
y^{(k+1)a} U_{\tau_a,n-(2(k+1)a)-1}(y).     
\end{align*}
\end{theorem}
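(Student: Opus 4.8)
The plan is to follow the reciprocity method exactly as set up in the introduction, specializing to the family $\tau = \tau_a$ with $\tau_1\tau_3\cdots\tau_{2a-1} = 12\cdots a$ and $\tau_2\tau_4\cdots\tau_{2a} = (2a)(2a-1)\cdots(a+1)$. Since $\tau$ starts with $1$, equation (\ref{eq:I2}) applies, so it suffices to compute $U_{\tau,n}(y) = n!\,\theta_\tau(h_n)$ via the homomorphism $\theta_\tau(e_n) = \frac{(-1)^n}{n!}\mbox{NM}_{\tau,n}(1,y)$. First I would expand $n!\,\theta_\tau(h_n)$ as a signed sum over filled, labeled brick tabloids: each brick of length $\ell$ carries a factor coming from $\theta_\tau(e_\ell) = \frac{(-1)^\ell}{\ell!}\mbox{NM}_{\tau,\ell}(1,y)$, together with the usual labeling of cells by a permutation and a choice of which brick-boundaries are "broken." The base case $U_{\tau,1}(y) = -y$ is immediate from $\theta_\tau(e_1) = -\mbox{NM}_{\tau,1}(1,y) = -y$.

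The core of the argument is the weight-preserving, sign-reversing involution $I$ on these tabloids, adapted from Jones--Remmel and Bach--Remmel to the two-descent setting. The idea is to scan the cells from left to right looking for the first place where either (i) we can break a brick boundary at a descent, or (ii) we can merge two adjacent bricks; $I$ toggles whichever operation is locally available, and this cancels all tabloids except a controlled set of fixed points. The new feature here, as the introduction warns, is that fixed points need not have increasing fillings within each brick, nor increasing brick-minima; instead, the fixed points will be governed by the combinatorial structure of $\tau_a$-matches. Concretely, because $\tau_a$ is built from an increasing sequence interleaved with a decreasing sequence, consecutive overlapping $\tau_a$-matches force rigid patterns, and a maximal run of overlapping matches occupying a segment of $2(k+1)a - 1$ cells contributes a count I would compute by a direct combinatorial argument — this is where the two summation indices $k$ and the binomial coefficients $\binom{n-(k+1)a-1}{(k+1)a-1}$ and $\binom{n-(k+1)a-2}{(k+1)a}$ should emerge, from choosing the positions of the increasing block relative to the decreasing block within such a run. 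The two separate sums (with signs $-y^{(k+1)a-1}$ and $+y^{(k+1)a}$) correspond to whether the run ends flush with a brick boundary or extends one cell further, mirroring the two-term pattern already visible in (\ref{Gamma22s}).

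After isolating the fixed points, I would set up the recursion by conditioning on the structure of the leftmost brick (or leftmost maximal overlapping run) of a fixed-point tabloid on $n$ cells: removing it leaves a fixed-point tabloid on fewer cells, with a weight factor that is exactly one of the terms in the claimed recursion, and the "do nothing special" case contributes $(1-y)U_{\tau,n-1}(y)$ just as in all the earlier examples. Summing over the possibilities for that leftmost piece — a single trivial brick, versus a run of $k+1$ overlapping $\tau_a$-matches for each admissible $k$ — yields the stated formula, with the ranges of $k$ (namely $\lfloor (n-2a)/(2a)\rfloor$ and $\lfloor (n-2a-2)/(2a)\rfloor$) dictated by how many cells such a run consumes.

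The main obstacle I expect is the fixed-point analysis: proving that the only surviving tabloids are exactly those built from maximal overlapping $\tau_a$-runs, and that each such run of length $2(k+1)a-1$ (or $2(k+1)a$) is counted with multiplicity precisely $\binom{n-(k+1)a-1}{(k+1)a-1}$ (resp. $\binom{n-(k+1)a-2}{(k+1)a}$) after accounting for the labeling and the bricks. This requires a careful case analysis of how $\tau_a$-matches can overlap — using the rigid "increasing-interleaved-with-decreasing" shape of $\tau_a$ to show overlaps are forced to share exactly one or exactly $a$ letters in a predictable way — and then a bijective or generating-function count of the internal labelings. Once that combinatorial lemma is in hand, assembling the recursion is routine bookkeeping of the kind already carried out in \cite{JR2,JR3,BR}.
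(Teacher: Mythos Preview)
Your plan is essentially the paper's own proof: the same involution on filled labeled brick tabloids, the same case split on whether $2$ sits in cell $2$ (giving $(1-y)U_{\tau,n-1}(y)$) or cell $3$, and the same analysis of a maximal initial chain of overlapping $\tau_a$-matches, with the two sums arising from whether that chain terminates after a match starting at cell $2ka+1$ (Subcase 2.A) or at cell $2ka+3$ (Subcase 2.B). One small correction: consecutive $\tau_a$-matches in these fixed points overlap in either $2$ or $2a-2$ letters (not ``one or $a$''), and the binomial coefficients count the choices of \emph{values} placed in the top row of the two-line array (the bottom row being forced to be $1,2,\ldots$), rather than positions of blocks --- but once you set up the two-line array picture as the paper does, the rigid poset structure of overlapping $\tau_a$-matches makes both the brick pattern $(2,2a-2,2,2a-2,\ldots)$ and the binomial counts fall out directly.
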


We note that our results allows us to compute $\mbox{NM}_\tau(t,x,y)$ in two cases where  
$\tau = \tau_1 \ldots \tau_6$ and 
$\tau_1 =1$, $\tau_3 =2$, and $\tau_5 =3$.  Namely, the case where 
$\tau = 162534$ is consider in Theorem \ref{thm:142536} and the 
the case where $\tau = 142536$ is a special case of 
Theorem \ref{thm:162534}.  All such 
permutations have $\des(\tau) =2$. In fact, the first author in his thesis 
has computed 
$\mbox{NM}_\tau(t,x,y)$ in the other 4 cases where  $\tau = \tau_1 \ldots \tau_6$ and 
$\tau_1 =1$, $\tau_3 =2$, and $\tau_5 =3$ which we will not present 
here due to lack of space.

The outline of this paper is the following. In Section \ref{sec:sym}, 
we shall provide the necessary background on symmetric functions 
for our applications. In section \ref{sec:recip}, 
we shall recall the basic reciprocity method of 
\cite{JR,JR2,JR3} and \cite{BR} in the case 
where the permutations of $\Gamma$ are allowed to have more than 
one descent.  In Section \ref{sec:1-2-3}, we shall prove Theorem \ref{thm:1-2-3}. In Section \ref{sec:142536}, 
we shall prove Theorem \ref{thm:142536}. Finally, in Section 
\ref{sec:162534}, we shall prove Theorem \ref{thm:162534}.

\section{Symmetric Functions}\label{sec:sym}

In this section, we give the necessary background on 
symmetric functions that will be used in our proofs.

A partition of $n$ is a sequence of positive integers 
\begin{math}\la = (\la_1, \ldots ,\la_s)\end{math} such that 
\begin{math}0 < \la_1 \leq \cdots \leq \la_s\end{math} and $n=\la_1+ \cdots +\la_s$. We shall write $\lambda \vdash n$ to denote that $\lambda$ is 
partition of $n$ and we let $\ell(\lambda)$ denote the number of parts 
of $\lambda$. When a partition of $n$ involves repeated parts, 
we shall often use exponents in the partition notation to indicate 
these repeated parts. For example, we will write 
$(1^2,4^5)$ for the partition $(1,1,4,4,4,4,4)$.

Let \begin{math}\Lambda\end{math} denote the ring of symmetric functions in infinitely  
many variables \begin{math}x_1,x_2, \ldots \end{math}. The \begin{math}\nth\end{math} elementary symmetric function \begin{math}e_n = e_n(x_1,x_2, \ldots )\end{math}  and \begin{math}\nth\end{math} homogeneous 
 symmetric function \begin{math}h_n = h_n(x_1,x_2, \ldots )\end{math} are defined by the generating functions given in (\ref{genfns}). 
For any partition \begin{math}\la = (\la_1,\dots,\la_\ell)\end{math}, let \begin{math}e_\la = e_{\la_1} 
\cdots e_{\la_\ell}\end{math} and \begin{math}h_\la = h_{\la_1} 
\cdots h_{\la_\ell}\end{math}.  It is well known that \begin{math}e_0,e_1, \ldots \end{math} is 
an algebraically independent set of generators for \begin{math}\La\end{math}, and hence, 
a ring homomorphism \begin{math}\theta\end{math} on \begin{math}\Lambda\end{math} can be defined  
by simply specifying \begin{math}\theta(e_n)\end{math} for all \begin{math}n\end{math}.

If $\lambda =(\lambda_1, \ldots, \lambda_k)$ is a partition of $n$, 
then a $\lambda$-brick tabloid of shape $(n)$ is a filling 
of a rectangle consisting of $n$ cells with bricks of sizes 
$\lambda_1, \ldots, \lambda_k$ in such a way that no 
two bricks overlap. For example, Figure 
\ref{fig:DIMfig1} shows the six $(1^2,2^2)$-brick tabloids of 
shape $(6)$.

\fig{DIMfig1}{The six $(1^2,2^2)$-brick tabloids of shape $(6)$.}

Let \begin{math}\mathcal{B}_{\la,n}\end{math} denote the set of \begin{math}\la\end{math}-brick tabloids 
of shape \begin{math}(n)\end{math} and let \begin{math}B_{\la,n}\end{math} be the number of \begin{math}\la\end{math}-brick 
tabloids of shape \begin{math}(n)\end{math}.  If \begin{math}B \in \mathcal{B}_{\la,n}\end{math}, we 
will write \begin{math}B =(b_1, \ldots, b_{\ell(\la)})\end{math} if the lengths of 
the bricks in \begin{math}B\end{math}, reading from left to right, are 
\begin{math}b_1, \ldots, b_{\ell(\la)}\end{math}. For example, the brick 
tabloid in the top right position in Figure \ref{fig:DIMfig1} is 
denoted as $(2,1,1,2)$. 
E\u{g}ecio\u{g}lu and the second author  \cite{Eg1} proved that 
\begin{equation}\label{htoe}
h_n = \sum_{\la \vdash n} (-1)^{n - \ell(\la)} B_{\la,n}~ e_\la.
\end{equation}
This interpretation of the expansion of 
$h_n$ in terms of the $e_{\lambda}$s will aid us in describing the coefficients of $\theta_\Gamma(H(t))=\mbox{U}_\Gamma(t,y)$ described 
in the next section, 
which will in turn allow us to compute the coefficients 
$\mbox{NM}_{\Gamma,n}(t,x,y)$.

\section{Extending the reciprocity method} \label{sec:recip}

Let $\Gamma$ be the set of permutations that all start with 1 and 
there is a $k \geq 1$ such that all $\sg \in \Gamma$ have 
$\des(\sg) \leq k$ and there is at least one $\tau \in \Gamma$ 
such that $\des(\tau) =k$. We want to give a combinatorial interpretation to \begin{equation}\label{eq:basic}
U_{\Gamma}(t,y)=\frac{1}{\mbox{NM}_{\Gamma}(t,1,y)}  = \frac{1}{1+ \sum_{n \geq 1} 
\frac{t^n}{n!} \mbox{NM}_{\Gamma,n}(1,y)}
\end{equation}
where 
$$\mbox{NM}_{\Gamma,n}(1,y) = \sum_{\sg \in \mathcal{NM}_n(\Gamma)} y^{1+\des(\sg)}.$$ 

We define a ring homomorphism $\theta_{\Gamma}$ on the ring of symmetric functions $\Lambda$ by setting $\theta_{\Gamma}(e_0) = 1$ and, for $n \geq 1,$ \begin{equation} \label{Theta} \theta_{\Gamma}(e_n) = \frac{(-1)^n}{n!} \mbox{NM}_{\Gamma,n}(1,y).
\end{equation}
It then follows that 
\begin{eqnarray} \label{theta=u}
\theta_{\Gamma}(H(t)) &=& \sum_{n \geq 0} \theta_{\Gamma}(h_n)t^n  = \frac{1}{\theta_{\tau}(E(-t))} = \frac{1}{1 + \sum_{n \geq 1} (-t)^n \theta_{\Gamma}(e_n)} \nonumber \\
&=& \frac{1}{1 + \sum_{n \geq 1} \frac{t^n}{n!} \mbox{NM}_{\Gamma,n}(1,y)} = 
U_{\Gamma}(t,y).
\end{eqnarray}

Thus $\displaystyle U_{\Gamma,n}(y) = n! \theta_\Gamma(h_n)$.  
Using (\ref{htoe}), we can compute \begin{eqnarray}\label{eq:basic1}
n! \theta_{\Gamma}(h_n) &=& n! \sum_{\la \vdash n} (-1)^{n-\ell(\la)} 
B_{\la,n}~ \theta_{\Gamma}(e_\la) \nonumber \\
&=& n! \sum_{\la \vdash n} (-1)^{n-\ell(\la)} \sum_{(b_1, \ldots, 
b_{\ell(\la)}) \in \mathcal{B}_{\la,n}} \prod_{i=1}^{\ell(\la)}  
\frac{(-1)^{b_i}}{b_i!} \mbox{NM}_{\Gamma,b_i}(1,y) \nonumber \\
&=& \sum_{\la \vdash n} (-1)^{\ell(\la)} \sum_{(b_1, \ldots, b_{\ell(\la)}) \in \mathcal{B}_{\la,n}} \binom{n}{b_1, \ldots, b_{\ell(\la)}}
\prod_{i=1}^{\ell(\la)}  \mbox{NM}_{\Gamma,b_i}(1,y).
\end{eqnarray}

To give combinatorial interpretation to the right hand side of (\ref{eq:basic1}), we select a brick tabloid $B = (b_1, b_2, \dots, b_{\ell(\la)} )$ of shape $(n)$ filled with bricks whose sizes induce the partition $\la$. We interpret the multinomial coefficient $\binom{n}{b_1, \ldots, b_{\ell(\la)}}$ as the number of ways to choose an ordered set partition $\mathcal{S} =(S_1, S_2, \dots, S_{\ell(\la)})$ of $\{1,2, \dots, n\}$ such that $|S_i| = b_i,$ for $i = 1, \dots, \ell(\la).$ For each brick $b_i,$ we then fill the cells of $b_i$ with numbers from $S_i$ such that the entries in the brick reduce to a permutation $\sg^{(i)} = \sg_1 \cdots \sg_{b_i}$ in $\mathcal{NM}_{b_i}(\Gamma).$ We label each descent of $\sg$ that occurs within each brick as well as the last cell of each brick by $y.$  This accounts for the factor $y^{\des(\sg^{(i)})+1}$ within each brick. Finally, we use the factor $(-1)^{\ell(\la)}$ to change the label of the last cell of each brick from $y$ to $-y$. We will denote the filled labeled brick tabloid constructed in this way as $\langle B,\mathcal{S},(\sg^{(1)}, \ldots, \sg^{(\ell(\la))})\rangle$. 

For example, when $n = 17, \Gamma = \{1324, 1423, 12345\},$ and 
$B = (9,3,5,2),$ consider the ordered set partition $\mathcal{S}=(S_1,S_2,S_3,S_4)$ of $\{1,2,\dots, 17\}$ where $S_1=\{2,5,6,9,11,15,16,17,19\},$ $S_2 = \{7,8,14\},$ $S_3 = \{1,3,10,13,18\},$ $S_4 = \{4,12\}$ and the permutations 
$\sg^{(1)} = 1~2~4~6~5~3~7~9~8 \in \mathcal{NM}_{9}(\Gamma),$ $\sg^{(2)} = 1~3~2 \in \mathcal{NM}_{7}(\Gamma),$ $\sg^{(3)} = 5~1~2~4~3 \in \mathcal{NM}_{5}(\Gamma),$ and $\sg^{(4)} = 2~1 \in \mathcal{NM}_{2}(\Gamma)$.  
Then the construction of 
$\langle B,\mathcal{S},(\sg^{(1)}, \ldots, \sg^{(4)})\rangle$ is 
pictured in Figure \ref{fig:CFLBTab}.

\begin{figure}[htbp]
  \begin{center}
    \includegraphics[width=0.6\textwidth]{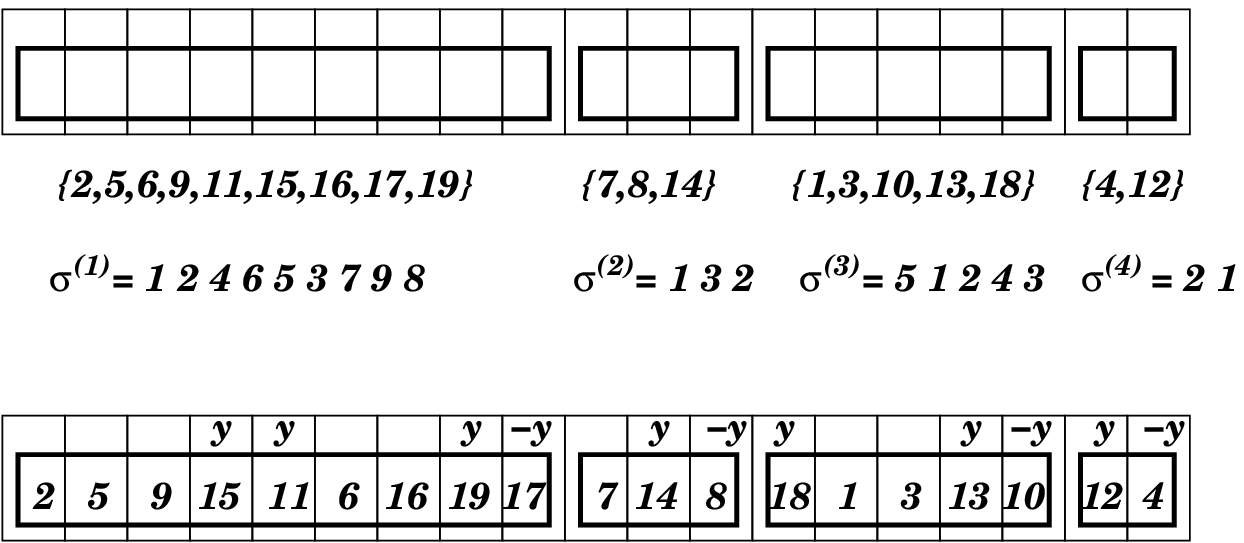}
    \caption{The construction of a filled-labeled-brick tabloid.}
    \label{fig:CFLBTab}
  \end{center}
\end{figure}


It is easy to see that we can recover 
the triple $ \langle B, (S_1, \dots, S_{\ell(\la)}), (\sg^{(1)}, \dots,\sg^{(\ell(\la))}  ) \rangle$ from $B$ and the permutation 
$\sg$ which is obtained by reading the entries in the 
cells from right to left.  We let $\mathcal{O}_{\Gamma, n}$ denote the set of all filled labeled brick tabloids created this way. That is, $\mathcal{O}_{\Gamma, n}$ consists of all pairs $O = (B, \sg )$ where 
\begin{enumerate}
\item $B = (b_1, b_2, \dots, b_{\ell(\la)})$ is a brick tabloid of shape $n$,  
\item $\sg = \sg_1  \cdots \sg_n$ is a permutation in $S_n$ such that there is no $\Gamma$-match of $\sg$ which 
lies entirely in a single brick of $B$, and 
\item if there is a cell $c$ such that a brick $b_i$ contains both cells $c$ and $c+1$ and $\sg_c > \sg_{c+1}$, then cell $c$ is labeled with a $y$ and the last cell of any brick is labeled with $-y$.
\end{enumerate}

We define the sign of each $O$ to be $sgn(O) = (-1)^{\ell(\la)}.$ The weight $W(O)$ of $O$ is defined to be the product of all the labels $y$ used in the brick. For example, 
the labeled brick tabloid pictured  Figure \ref{fig:CFLBTab} has 
$W(O) = y^{11}$ and $sgn(O) =(-1)^4 =1$. It follows that 
\begin{equation}\label{eq:basic2}
n!\theta_{\Gamma}(h_n) = \sum_{O \in \mathcal{O}_{\Gamma,n}} sgn(O) W(O).
\end{equation}

Next we define a sign-reversing, weight-preserving mapping $J_{\Gamma}: \mathcal{O}_{\Gamma, n} \rightarrow \mathcal{O}_{\Gamma, n}$ as 
follows.  Let $(B,\sg) \in \mathcal{O}_{\Gamma, n}$ where 
$B=(b_1, \ldots, b_k)$ and $\sg = \sg_1 \ldots \sg_n$. 
Then for any $i$, we let 
$\mbox{first}(b_i)$ be the element in the left-most cell of $b_i$ and 
$\mbox{last}(b_i)$ be the element in the right-most cell of $b_i$.
Then we read the cells of $(B,\sg)$ from left to right, looking for the first cell $c$ such that either \\
\ \\
{\bf Case I.}  cell $c$ is labeled with a $y$ in some brick $b_j$ and 
either {\bf (a)} $j=1$ or {\bf (b)} $j > 1$ and either 
{\bf (b.1)} $\mbox{last}(b_{j-1}) < \mbox{first}(b_j)$ or  {\bf (b.2)} 
 $\mbox{last}(b_{j-1}) > \mbox{first}(b_j)$ and there 
is $\tau$-match contained in the cells of $b_{j-1}$ and 
the cells $b_j$ that end weakly to the left of cell $c$ for 
some $\tau \in \Gamma$ or \\ 
\ \\
{\bf Case II.} cell $c$ is at the end of brick $b_i$ where $\sg_c > \sg_{c+1}$ and there is no $\Gamma$-match of $\sg$ that lies entirely in the cells of the bricks $b_i$ and $b_{i+1}$.\\

In Case I, we define $J_{\Gamma}((B,\sg))$ to be the filled labeled brick tabloid obtained from $(B,\sg)$ by breaking the brick $b_j$ that contains cell $c$ into two bricks $b_j'$ and $b_j''$ where $b_j'$ contains the cells of $b_j$ up to and including the cell $c$ while $b_j''$ contains the remaining cells of $b_j$. In addition, we change the label of cell $c$ from $y$ to $-y$. In Case II, 
$J_{\Gamma}((B,\sg))$ is obtained by combining the two 
bricks $b_i$ and $b_{i+1}$ 
into a single brick $b$ and changing the label of cell $c$ from $-y$ to $y$. If neither case occurs, then we let $J_{\Gamma}((B,\sg)) = (B,\sg)$.

\begin{figure}[h]
  \begin{center}
   \includegraphics[width=0.60\textwidth]{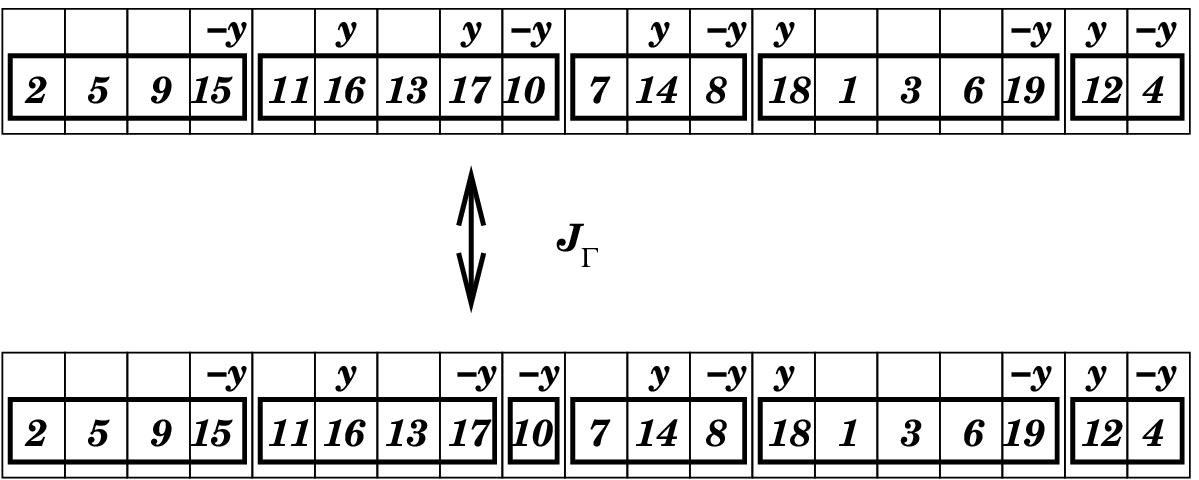}
   \caption{An example of the involution $J_{\Gamma}$.}
   \label{fig:JGamma}
  \end{center}
\end{figure}

For example, suppose $\Gamma = \{\tau\}$ where $\tau =14253$ and 
$(B,\sg) \in \mathcal{O}_{\Gamma,18}$ pictured at 
the top of Figure \ref{fig:JGamma}. 
We cannot use cell $c=4$ to define $J_{\Gamma}(B,\sg)$, 
because if we combined bricks $b_1$ and $b_2$, 
then $\red(9~15~11~16~13) = \tau$ would be a $\tau$-match 
contained in the resulting brick.  Similarly, 
we cannot use  cell $c=6$ to apply the involution because it fails to meet 
condition (b.2). In fact the first $c$ for which either Case I or 
Case II applies is cell $c=8$ so that $J_{\Gamma}(B,\sg)$ is 
equal to the $(B',\sg)$ pictured on the bottom of 
Figure \ref{fig:JGamma}.

We now prove that $J_{\Gamma}$ is an involution by showing $J_{\Gamma}^2$ is the identity mapping. Let $(B,\sg) \in \mathcal{O}_{\Gamma,n}$ where $B=(b_1, \ldots, b_k)$ and $\sg = \sg_1 \ldots \sg_n$. The key observation here is that applying the mapping $J_{\Gamma}$ to a brick in Case I will produce one in Case II, and vice versa. 

Suppose the filled, labeled brick tabloid $(B,\sg)$ is in Case I and its image $J_{\Gamma}((B,\sg))$ is obtained  by splitting some brick $b_j$ after cell $c$ into two bricks $b_j'$ and $b_j''.$  There are now two possibilities. \begin{itemize}

\item[(a)] $c$ is in the first brick $b_1$. In this case, $c$ must be the first cell which is labeled with $y$ so that the elements in $b_1'$ will be increasing. Furthermore, since we are assuming there is no $\Gamma$-match in the cells of brick $b_1$ in $(B,\sg)$, there cannot be any $\Gamma$-match that involves the cells of bricks $b_1'$ and $b_1''$ in $J_{\Gamma}((B,\sg))$. Hence, when we consider $J_{\Gamma}((B,\sg))$, the first possible cell where we can apply $J_{\Gamma}$ will be cell $c$ because we can now combine $b_1'$ and $b_1''$.  Thus, 
when we apply $J_{\Gamma}$ to $J_{\Gamma}((B,\sg))$, we will be 
in Case II using cell $c$ so that we will recombine bricks $b_1'$ and $b_1''$ into $b_1$ and replace the label of $-y$ on cell $c$ by $y$. Hence $J_{\Gamma}(J_{\Gamma}((B,\sg))) =(B,\sg)$ in this case. 

\item[(b)] $c$ is in brick $b_j$, where $j > 1$. Note that our definition of when a cell labeled $y$ can be used in Case I to define $J_{\Gamma}$ depends only on the cells and the brick structure to the left of that cell. Hence, we can not use any of the cells labeled $y$ to the left of $c$ to define $J_{\Gamma}(J_{\Gamma}((B,\sg)))$. Similarly, if we have two bricks $b_s$ and $b_{s+1}$ which lie entirely to the left of cell $c$ such that $\mbox{last}(b_s) = \sg_d > \mbox{first}(b_{s+1}) =\sg_{d+1}$, the criteria to use cell $d$ in the definition of $J_{\Gamma}$ on $J_{\Gamma}((B,\sg))$ depends only on the elements in bricks $b_s$ and $b_{s+1}$. Thus, the only cell $d$ which we could possibly use to define $J_{\Gamma}$ on $J_{\Gamma}((B,\sg))$ that lies to the left of $c$ is the last cell of $b_{j-1}$. However, our conditions that either $\mbox{last}(b_{j-1}) < \mbox{first}(b_{j}) = \mbox{first}(b_{j}')$ or  $\mbox{last}(b_{j-1}) > \mbox{first}(b_{j}) = \mbox{first}(b_{j}')$ with a $\Gamma$-match contained in the cells of $b_{j-1}$ and $b_j'$ force the first cell that can be used to define $J_{\Gamma}$ on $J_{\Gamma}((B,\sg))$ to be cell $c$. Thus, when we  apply $J_{\Gamma}$ to $J_{\Gamma}((B,\sg))$, we will be in Case II using cell $c$ and we will recombine bricks $b_j'$ and $b_j''$ into $b_j$ and replace the label of $-y$ on cell $c$ by $y$. Thus $J_{\Gamma}(J_{\Gamma}((B,\sg))) =(B,\sg)$ in this case. 
\end{itemize}

Suppose $(B,\sg)$ is in Case II and we define $J_{\Gamma}((B,\sg))$ at cell $c$, where $c$ is last cell of $b_j$ and $\sg_c > \sg_{c+1}$. Then by the same arguments that we used in Case I, there can be no cell labeled $y$ to the left of this cell $c$ in either $(B,\sg)$ or $J(B,\sg)$ which 
can be used to define the involution $J_\Gamma$. This follows from the fact that the brick structure before cell $c$ is unchanged between $(B,\sg)$ and $J(B,\sg)$. Similarly, there can be no two bricks that lie entirely to the left of cell $c$ in $J_{\Gamma}((B,\sg))$ that can be combined under $J_{\Gamma}$. Thus, the first cell that we can use to define $J_{\Gamma}$ to $J_{\Gamma}((B,\sg))$ is cell $c$ and it is easy to check that it satisfies the conditions of Case I.  Thus, when we apply $J_{\Gamma}$ to $J_{\Gamma}((B,\sg))$, 
we will be in Case I using cell $c$ and  we will combine bricks $b_j$ and $b_{j+1}$ into a single brick $b$ and replaced the label on cell $c$ by $y$. Then it is easy to see that when applying $J_{\Gamma}$ to $J_{\Gamma}((B,\sg))$, we will split $b$ back into bricks $b_j$ and $b_{j+1}$ and change the label on cell $c$ back to $-y$. Thus  $J_{\Gamma}(J_{\Gamma}((B,\sg))) =(B,\sg)$ in this case.

Hence $J_{\Gamma}$ is an involution. It is clear that if $J_{\Gamma}(B,\sg) \neq (B,\sg)$,  then $sgn(B,\sg)W(B,\sg) = -sgn(J_{\Gamma}(B,\sg))W(J_{\Gamma}(B,\sg)).$ Thus, it follows from (\ref{eq:basic2}) that 
\begin{equation}\label{eq:basic3} 
U_{\Gamma,n}(y) = n!\theta_\Gamma(h_n) =  \sum_{O \in \mathcal{O}_{\Gamma,n}} \sgn{O} W(O) = \sum_{O \in \mathcal{O}_{\Gamma,n}, J_{\Gamma}(O) =O} \sgn{O} W(O). 
\end{equation}
Thus, to compute $U_{\Gamma,n}(y)$, we must analyze the fixed points of $J_{\Gamma}$. Our next lemma characterizes the fixed points of $J_{\Gamma}$.

\begin{lemma} \label{lem:keyGamma} Let $B=(b_1, \ldots, b_k)$ be 
a brick tabloid of shape $(n)$ and $\sg = \sg_1 \ldots \sg_n \in S_n$. 
Then $(B,\sg)$ is a fixed point of $J_{\Gamma}$ if and only if it satisfies the following properties: 
\begin{description}
\item[(a)] if $i=1$ or $i > 1$ and $\mbox{last}(b_{i-1}) < \mbox{first}(b_i)$, then $b_i$ can have no cell labeled $y$ so that $\sg$ must be increasing in $b_i$, 

\item[(b)] if $i > 1$ and $\sg_e = \mbox{last}(b_{i-1}) > \mbox{first}(b_i)=\sg_{e+1}$, then there must be a $\Gamma$-match contained in the cells of $b_{i-1}$ and $b_i$ which must necessarily involve $\sg_e$ and $\sg_{e+1}$ and there can be at most $k-1$ cells labeled $y$ in $b_i$, and

\item[(c)] if $\Gamma$ has the property that, for all $\tau \in \Gamma$ such that $\des(\tau) = j \geq 1$, the bottom elements \footnote{If $\sg$ is a permutation with $\sg_i > \sg_{i+1}$, i.e. there is a descent in $\sg$ at position $i$, then we shall refer to $\sg_{i+1}$ as the bottom element of this descent.} of the descents in $\tau$ are $2, \ldots, j+1$,  when reading from left to right, 
then 
$$\mathrm{first}(b_1) < \mathrm{first}(b_2) < \cdots < \mathrm{first}(b_k).$$ 

\end{description}
\end{lemma}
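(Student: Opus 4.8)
The plan is to read the fixed points straight off the recipe defining $J_\Gamma$: $(B,\sigma)$ is a fixed point exactly when, scanning the cells from left to right, no cell ever meets the hypotheses of Case I or Case II. So for each direction I would convert a failure of one of (a)--(c) into a cell that fires one of the two cases, and conversely. Sufficiency is then the easier half: if $(B,\sigma)$ satisfies (a) and (b), a cell firing Case II would be the last cell of some brick $b_j$ with $\sigma_c > \sigma_{c+1}$ and no $\Gamma$-match inside $b_j \cup b_{j+1}$, contradicting (b) applied to the pair $(b_j, b_{j+1})$; and a cell $c$ firing Case I would be an internal descent of some brick $b_j$, which by (a) must be preceded by a descent, so one only has to rule out clause (b.2), i.e. rule out a $\Gamma$-match lying in $b_{j-1}$ together with the portion of $b_j$ up to and including $c$.

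For necessity, suppose $(B,\sigma)$ is a fixed point. Property (a): if $i=1$, or $i>1$ with $\mathrm{last}(b_{i-1}) < \mathrm{first}(b_i)$, and $b_i$ had an internal descent, then the leftmost such cell is labelled $y$ and fires Case I (via clause (a) when $i=1$, via (b.1) otherwise); so $b_i$ must be increasing. Property (b): if $i>1$ and $\sigma_e = \mathrm{last}(b_{i-1}) > \mathrm{first}(b_i) = \sigma_{e+1}$, then cell $e$ is the last cell of $b_{i-1}$ and $\sigma_e > \sigma_{e+1}$, so unless there is a $\Gamma$-match inside the cells of $b_{i-1}\cup b_i$, cell $e$ fires Case II. Hence such a match $M$ exists; by condition (2) in the definition of $\mathcal{O}_{\Gamma,n}$ it cannot sit inside a single brick, so it spans both bricks and therefore contains the consecutive cells $e, e+1$, i.e. involves $\sigma_e$ and $\sigma_{e+1}$. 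Letting $f$ be the last cell of $M$ (necessarily in $b_i$), an internal descent of $b_i$ at a cell $c \ge f$ would fire Case I via (b.2) using $M$; so every internal descent of $b_i$ lies strictly to the left of $f$, hence is a descent of $\red(M)\in\Gamma$ other than the one at cell $e$, and there are at most $\des(\red(M))-1 \le k-1$ of them.

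Property (c) is, I expect, the crux. Assume the stated hypothesis on $\Gamma$. I would first prove that in a fixed point $\mathrm{first}(b_i)$ is the smallest entry of $b_i$, for every $i$. This is clear if $b_i$ is increasing; otherwise $b_i$ has a descent before it, so by (b) there is a $\Gamma$-match $M$ with $\red(M)=\tau$ spanning $b_{i-1}, b_i$. If the descent at the junction is the $m$-th descent of $\tau$, then by hypothesis $\mathrm{first}(b_i)$ is the entry of $M$ of rank $m+1$, while the entries of $M$ of ranks $1,\dots,m$ --- the leading $1$ of $\tau$ together with the bottoms of the first $m-1$ descents of $\tau$ --- all occur at positions at most that of the $m$-th descent, hence lie in $b_{i-1}$; so $\mathrm{first}(b_i)$ is smaller than every other entry of $M$ that lies in $b_i$. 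The remaining entries of $b_i$ (those to the right of $M$) have no descent by the proof of (b), so they form an increasing run, and together with the fact that the last entry of $\tau$ is at least as large as every descent bottom this run also exceeds $\mathrm{first}(b_i)$. This proves the claim. Given it, for consecutive bricks: if $\mathrm{last}(b_i) < \mathrm{first}(b_{i+1})$ then $\mathrm{first}(b_i) = \min(b_i) \le \mathrm{last}(b_i) < \mathrm{first}(b_{i+1})$; and if $\mathrm{last}(b_i) > \mathrm{first}(b_{i+1})$, the $\Gamma$-match spanning $b_i, b_{i+1}$ begins at a cell of $b_i$ carrying the value-$1$ letter of that match, so that entry is $\ge \min(b_i) = \mathrm{first}(b_i)$, while $\mathrm{first}(b_{i+1})$ is a descent bottom of the same match and hence strictly larger. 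In either case $\mathrm{first}(b_i) < \mathrm{first}(b_{i+1})$.

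I expect the main obstacle to be precisely the analysis behind (c): pinning down where the small-rank entries of the junction match $M$ sit uses both standing hypotheses on $\Gamma$ --- that every pattern begins with $1$, and that its descent bottoms are $2,\dots,j+1$ in order --- together with the structure of $b_i$ established in (a)--(b). A smaller but genuine loose end is the sufficiency step flagged above: to complete it one must rule out a competing $\Gamma$-match ending before an internal descent of $b_j$, which again calls for the "begins with $1$" bookkeeping on ranks, showing that the only $\Gamma$-match inside $b_{j-1}\cup b_j$ involving the junction cells is the one supplied by (b) and that it ends to the right of every internal descent of $b_j$.
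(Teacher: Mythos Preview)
Your necessity argument is essentially the paper's proof, with part (c) reorganized: the paper argues by contradiction, splitting into the cases ``$\sigma$ increasing in $b_i$'' versus ``$\sigma$ not increasing in $b_i$'', and in the second case proves that $\mathrm{first}(b_i)=\min(b_i)$ as an intermediate step. You extract this ``minimum-at-first-cell'' claim as a standalone lemma and then handle both junction types uniformly. The content is the same; your packaging is a bit cleaner.

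Where you diverge from the paper is in attempting the converse. The paper, despite the ``if and only if'' in the statement, proves only the forward direction (fixed point $\Rightarrow$ (a), (b), (c)) and only ever uses that direction. Your instinct that the sufficiency half hides a problem is correct, and the loose end you flag cannot actually be closed: conditions (a) and (b) as stated do \emph{not} force $(B,\sigma)$ to be a fixed point. For instance, take $\Gamma=\{132,\,14253\}$ (so $k=2$), $B=(2,5)$, and $\sigma=1\,7\,3\,4\,6\,2\,5$. Then $b_1$ is increasing, there is a $132$-match on cells $1,2,3$ crossing the junction, $b_2$ has exactly one internal descent (at cell $5$, within the $k-1=1$ bound), and there is no $\Gamma$-match inside either brick; yet cell $5$ fires Case~I via clause (b.2), because the $132$-match ends at cell $3\le 5$. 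So $(B,\sigma)$ satisfies (a), (b), (c) but is not a fixed point. The ``if and only if'' in the lemma is an overstatement; treat the lemma as giving necessary conditions only, which is all the later sections need.
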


\begin{proof} Suppose $(B,\sg)$ is a fixed point of $J_{\Gamma}$. Then it must be the case that in $(B,\sg)$, there is no cell $c$ to which either 
Case I or Case II applies. That is, when attempting to apply the involution $J_{\Gamma}$ to $(B,\sg)$, we cannot split any brick at a cell labeled $y$  and 
we cannot combine two consecutive bricks where the last cell of 
the first brick is larger than the first cell of the second brick.

For (a), note that if there is a cell labeled $y$ in $b_i$ and  $c$ is the left-most cell of $b_i$ labeled with $y$, then $c$ satisfies the conditions of Case I. Thus, there can be no cell labeled $y$ in $b_i$.

For (b), note that if there is no $\Gamma$-match contained in the cells of $b_{i-1}$ and $b_i$, then $e$ satisfies the conditions of Case II.  Thus, there must be a $\Gamma$-match contained in the cells of $b_{i-1}$ and $b_i$. If there are $k$ or more cells labeled $y$ in $b_i$, then let $c$ be the $k^{th}$ cell, reading from left to right, which is labeled with $y$. Then we know there is $\tau$-match contained in the cells of $b_{i-1}$ and $b_i$ which must necessarily involve $\sg_e$ and $\sg_{e+1}$ for some $\tau \in \Gamma$. But this $\tau$-match must end weakly before cell $c$ since otherwise $\tau$ would have at least $k+1$ descents. Thus $c$ would satisfy the conditions to apply Case I of our 
involution. Hence there can be no such $c$ which means that 
each such brick can contain at most $k-1$ descents.

To prove (c), suppose for a contradiction that 
there exist two consecutive bricks $b_i$ and $b_{i+1}$ 
such that $\sg_e = \mathrm{first}(b_{i}) > \mathrm{first}(b_{i+1}) =\sg_f$. There are two cases. \\
\ \\
{\bf Case A.}  $\sg$ is increasing in $b_i$. \\
Then $\sg_{f-1} = \mathrm{last}(b_i)$.  If $\sg_{f-1} < \sg_f$, then we know that $\sg_e \leq \sg_{f-1} < \sg_{f}$ which contradicts our choice of $\sg_e$ and $\sg_f$. Thus it must be the case that $\sg_{f-1} > \sg_f$.  But then there is $\tau \in \Gamma$ such that $\des(\tau) =j \geq 1$ and there is a $\tau$-match in the cells of $b_i$ and $b_{i+1}$ involving the $\sg_{f-1}$ and $\sg_f$. By our assumptions, $\sg_f$ can only play the role of $2$ in such a $\tau$-match. Hence there must be some $\sg_g$ with $e \leq g \leq f-2$ which plays the role of 1 in this $\tau$-match. But then we would have $\sg_e \leq \sg_g  < \sg_f$ which contradicts our choice of $\sg_e$ and $\sg_f$. Thus $\sg$ cannot be increasing in $b_i$. \\
\ \\
{\bf  Case B.} $\sg$ is not increasing in $b_i$. \\
In this case, by part (a), we know that it must be the case that $\sg_{e-1} = \mbox{last}(b_{i-1}) > \sg_e = \mbox{first}(b_i)$ and, by (b), there is $\tau \in \Gamma$ such that $\des(\tau) =j \geq 1$ and there is a $\tau$-match in the cells of $b_{i-1}$ and $b_{i}$ involving the cells $\sg_{e-1}$ and $\sg_e$. Call this $\tau$-match $\alpha$ and suppose that cell $h$ is the bottom element of the last descent in $\alpha$. It cannot be that $\sg_e =\sg_h$.  That is, there can be no cell labeled $y$ that occurs after cell $h$ in $b_i$ since otherwise the left-most such cell $c$ would satisfy the conditions of Case I of the definition of $J_{\Gamma}$. But this would mean that $\sg$ is increasing in $b_i$ starting at $\sg_h$ so that if $\sg_e =\sg_h$, then $\sg$ would be increasing in $b_i$ which contradicts our assumption in this case. Thus there is some $2 \leq i \leq j$ such that $\sg_{e}$ plays the role of $i$ in the $\tau$-match $\alpha$ and $\sg_h$ plays the role of $j+1$ in the $\tau$-match $\alpha$. But this means that $\sg_e$ is the smallest element in brick $b_i$. That is, let $\sg_c$ be the smallest element in  $b_i$. If $\sg_e \neq \sg_c$, then $\sg_c$ must be the bottom of some descent in $b_i$ which implies that $c \leq h$. But then $\sg_c$ is part of the $\tau$-match $\alpha$ which means that $\sg_c$ must be playing the role of one of $i+1, \ldots, j+1$ in the $\tau$-match $\alpha$ and $\sg_e$ is playing the role of $i$ in the $\tau$-match $\alpha$ which is impossible if $\sg_e \neq \sg_c$. It follows that $\sg_e \leq \sg_{f-1}$. Hence, it can not be that case that $\sg_{f-1} < \sg_f$ since otherwise $\sg_e < \sg_f$. Thus it must be the case that $\sg_{f-1} > \sg_f$. But this means that there exists some $\delta \in \Gamma$ such that $\des(\delta) =p \geq 1$ and there is a $\delta$-match in the cells of $b_i$ and $b_{i+1}$ involving the $\sg_{f-1}$ and $\sg_f$.  Call this $\delta$-match $\beta$. By assumption, the bottom elements of the descents in $\delta$ are $2,3, \ldots, p+1$ so that $\sg_f$ must be playing the role of $2,3, \ldots ,p+1$ in the $\delta$-match $\beta$. Let $\sg_g$ be the element that plays the role of $1$ in the $\delta$-match $\beta$. $\sg_g$ must be in $b_i$ since $\delta$ must start with 1. But then we would have that $\sg_e \leq \sg_g < \sg_f$ since $\sg_e$ is the smallest element in $b_i$. \\

Thus, both Case A and Case B are impossible. 
Hence we must have that 
$$\mathrm{first}(b_1) < \mathrm{first}(b_2) < \cdots < \mathrm{first}(b_k).$$  
\end{proof}

We note that if condition (3) of the Lemma fails, 
it may be that the first elements 
of the bricks do not form an increasing sequence.
For  example, it is easy to check that if $\Gamma = \{15342\}$, 
then the $(B,\sg)$ pictured in Figure \ref{fig:Counter2} is 
such a fixed point of $J_{\Gamma}$.

\begin{figure}[h]
  \begin{center}
   \includegraphics[width=0.60\textwidth]{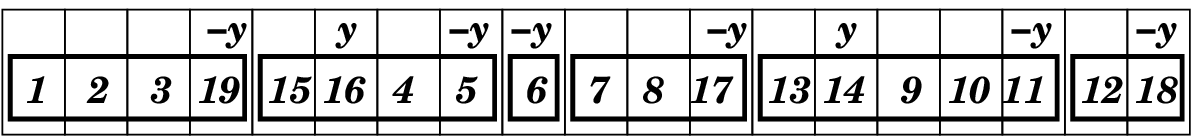}
   \caption{A fixed point of $J_{\{15342\}}$.}
   \label{fig:Counter2}
  \end{center}
\end{figure}

\section{The proof of Theorem \ref{thm:1-2-3}  }\label{sec:1-2-3}

In this section, we shall prove Theorem \ref{thm:1-2-3} which is the simplest case 
of our three examples. For convenience, 
we first restate the statement of Theorem \ref{thm:1-2-3}.

\begin {thm} 
Let $\Gamma = \{14253,15243\}$. Then 
$$\mbox{NM}_\Gamma(t,x,y)=\left(\frac1{U_\Gamma(t,y)}\right)^x \text{ where }U_\Gamma(t,y)=1+\sum_{n\geq1}U_{\Gamma,n}(y)\frac{t^n}{n!},$$ with $U_{\Gamma,1}(y)=-y$, and for $n \geq 2,$
\begin{align*}
\displaystyle  U_{\Gamma,n}(y) & = (1-y)U_{\Gamma,n-1}(y)  -y^2(n-3)\left(U_{\Gamma, n-4}(y) +(1-y)(n-5)U_{\Gamma, n-5}(y)  \right)\\
& \qquad \qquad \qquad \qquad \qquad -y^3(n-3)(n-5)(n-6)U_{\Gamma,n-6}(y).
\end{align*}

\end{thm}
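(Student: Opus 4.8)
The plan is to apply equation~(\ref{eq:basic3}) and Lemma~\ref{lem:keyGamma} to the specific set $\Gamma = \{14253, 15243\}$, computing $U_{\Gamma,n}(y) = \sum_{O \in \mathcal{O}_{\Gamma,n},\, J_\Gamma(O)=O} \sgn{O} W(O)$ by classifying the fixed points of $J_\Gamma$. First I would observe that both permutations in $\Gamma$ start with $1$ and have their descent bottoms at $2$ and $3$ reading left to right (in $14253$ the descents are at positions $3$ and $5$ with bottoms $2,3$; in $15243$ the descents are at positions $2$ and $4$ with bottoms $2,3$), so all three conditions of Lemma~\ref{lem:keyGamma} apply. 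In particular, in a fixed point $(B,\sg)$ with $B=(b_1,\dots,b_k)$, each brick $b_i$ is either increasing (when $i=1$ or $\mathrm{last}(b_{i-1}) < \mathrm{first}(b_i)$) or it has $\mathrm{last}(b_{i-1}) > \mathrm{first}(b_i)$ and there is a $\Gamma$-match spanning $b_{i-1}$ and $b_i$ using those two cells; moreover $\mathrm{first}(b_1) < \cdots < \mathrm{first}(b_k)$.

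Next I would pin down the local structure of a ``bad'' brick $b_i$ (one where $\mathrm{last}(b_{i-1}) > \mathrm{first}(b_i)$). Because every $\tau \in \Gamma$ has $\des(\tau)=2$, condition (b) of the lemma says $b_i$ has at most one cell labeled $y$, i.e. at most one internal descent. A careful case analysis of how $14253$ and $15243$ can occur as a $\Gamma$-match straddling the boundary between $b_{i-1}$ and $b_i$ — using that $\mathrm{first}(b_i)$ plays the role of the ``$2$'' and that the elements strictly inside $b_i$ after the match's last descent bottom must be increasing — should show that a bad brick, together with the tail of the preceding brick, occupies exactly $4$, $5$, or $6$ cells in a very constrained way, and contributes a fixed power of $-y$ depending on the number of internal descents. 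I expect this to produce: a contribution counted by the binomial-type factor $(n-3)$ when the straddling match has a specific length with the bad brick of size giving one internal descent ($y^2$ term, size $4$ or $5$ configurations), a $(n-3)(n-5)$-type factor for the size-$6$ configuration, and a $(1-y)$ factor accounting for whether the cell after the bad-brick pattern is a descent bottom or not. The polynomial coefficients $(n-3)$, $(n-5)$, $(n-6)$ should emerge as counts of the positions available for inserting the constrained pattern into the remaining increasing structure (these are ``choose one element'' or ``choose two elements'' counts after the forced letters are placed, analogous to the $\binom{n-2}{k_1-1}$ factors in the $\Gamma_{k_1,k_2}$ recursions from \cite{BR}).

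Then I would set up the recursion by conditioning on the rightmost structure of a fixed point $(B,\sg) \in \mathcal{O}_{\Gamma,n}$. If the last brick $b_k$ is a singleton or, more generally, if the last brick is increasing and ``detachable,'' removing it (or its last cell) and adjusting signs gives the $(1-y)U_{\Gamma,n-1}(y)$ term, exactly as in the one-descent case. The remaining fixed points are those whose right end terminates in one of the constrained bad-brick configurations identified above; removing that configuration (a block of $4$, $5$, or $6$ cells) and relabeling the exposed cell yields terms $-y^2(n-3)U_{\Gamma,n-4}(y)$, $-y^2(1-y)(n-3)(n-5)U_{\Gamma,n-5}(y)$, and $-y^3(n-3)(n-5)(n-6)U_{\Gamma,n-6}(y)$ respectively. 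I would verify the base case $U_{\Gamma,1}(y) = -y$ directly (the single brick of size $1$ contributes $\sgn = -1$ and weight $y$), and check small cases $n=2,3,4$ by hand against the claimed recursion.

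The main obstacle will be step two: the exhaustive but delicate case analysis of exactly which straddling $\Gamma$-matches are possible and what the forced internal structure of a bad brick looks like. Because $14253$ and $15243$ are \emph{not} minimally overlapping, two bad bricks in a row (or a bad brick immediately followed by the start of another $\Gamma$-match) can interact, and I must rule out or correctly account for such overlapping configurations — this is precisely the new phenomenon flagged in the introduction, where fixed points need not have increasing fillings within bricks. Getting the polynomial prefactors $(n-3)$, $(n-5)$, $(n-6)$ right requires carefully tracking which letters are forced by the pattern versus free to be chosen, and making sure the $(1-y)$ factor is attached to the correct configuration (the size-$5$ one but not the size-$4$ or size-$6$ ones), which is the part most prone to off-by-one errors.
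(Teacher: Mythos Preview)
Your framework is right --- equation~(\ref{eq:basic3}) and Lemma~\ref{lem:keyGamma} are exactly what the paper uses, and you correctly note that the descent bottoms of both patterns are $2$ and $3$ so that part~(c) of the lemma applies (though your descent \emph{positions} for $14253$ are off: they are $2$ and $4$, not $3$ and $5$). The substantive divergence is in where you condition: you propose peeling off structure from the \emph{right} end of a fixed point, whereas the paper works from the \emph{left}.

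This is not a cosmetic difference; it is the heart of the argument and your version will not produce the stated factors. In a fixed point $(B,\sigma)$ one always has $\sigma_1=1$, and the first elements of the bricks increase left to right, so the left end is rigid while the right end is not. The paper's recursion comes from asking where $2$ sits: if $\sigma_2=2$ then cell~$1$ can be stripped off (in two ways, as a singleton brick or as the first cell of $b_1$), giving $(1-y)U_{\Gamma,n-1}(y)$. If instead $\sigma_3=2$, then $b_1$ has size~$2$, there is a $\Gamma$-match on cells $1$--$5$, one argues $\sigma_5=3$, and then one chooses $\sigma_2\in\{4,\dots,n\}$ freely --- that is the factor $(n-3)$. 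One then subcases on whether a further $\Gamma$-match starts at cell~$3$; if not, the next smallest available element lands in cell $4$ or $6$, and if so it lands in cell~$7$ with $\sigma_4,\sigma_6$ chosen freely, giving $(n-5)(n-6)$. Stripping the first $4$, $5$, or $6$ cells then yields a smaller fixed point.

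By contrast, removing the last brick or last cell does not in general leave a fixed point (the penultimate brick may have relied on a $\Gamma$-match spanning into $b_k$), and there is no reason the \emph{value} in the last cell is constrained, so you will not see clean ``choose one of $n-3$ elements'' counts from the right. The polynomial prefactors you are aiming for are literally the number of ways to fill cells $2$, $4$, $6$ once $1,2,3$ are pinned to cells $1,3,5$; they have no natural right-end interpretation. Redo the case analysis from the left, tracking the position of $2$ and then of $3$, and the recursion will fall out.
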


\emph{Proof.} Let $\Gamma = \{14253,15243\},$ we need to show that the polynomials $$U_{\Gamma,n }(y) = \sum_{O \in \mathcal{O}_{\Gamma,n}, J_{\Gamma}(O) =O} \sgn{O} W(O)$$ satisfy the following properties: \begin{enumerate}
\item $U_{\tau,1}(y)=-y$, and 
\item for $n \geq 2,$ \begin{align*} \label{equ:1-2-3}
\displaystyle  U_{\Gamma,n}(y) & = (1-y)U_{\Gamma,n-1}(y)  -y^2(n-3)\left(U_{\Gamma, n-4}(y) +(1-y)(n-5)U_{\Gamma, n-5}(y)  \right)\\
& \qquad \qquad \qquad \qquad \qquad -y^3(n-3)(n-5)(n-6)U_{\Gamma,n-6}(y).
\end{align*} 
\end{enumerate}  

It is easy to see when $n=1,$ the only fixed point comes from 
brick tabloid that has a single brick of size 1 which contains 1 and 
the label on cell 1 is $-y.$ Thus 
$U_{\tau,1}(y)=-y.$

For $n \geq 2,$ let $O=(B,\sg)$ be a fixed point of $J_{\Gamma}$ where 
$B=(b_1, \ldots, b_k)$ and $\sg=\sg_1 \cdots \sg_n$. 
First we show that 1 must be in the first cell of $B$.  That is, if 
$1 = \sg_c$ where $c > 1$, then $\sg_{c-1} > \sg_c$. We claim 
that whenever we have a descent $\sg_i > \sg_{i+1}$ in $\sg$, then 
$\sg_i$ and $\sg_{i+1}$ must be part of a $\Gamma$-match in $\sg$. That is, 
it is either the case that (i) there are bricks $b_s$ and $b_{s+1}$ such that 
$\sg_i$ is the last cell of $b_s$ and $\sg_{i+1}$ 
is the first cell of $b_{s+1}$ or (ii) there is a 
brick $b_s$ that contains both $\sg_i$ and $\sg_{i+1}$. In case (i), condition 
3 of Lemma \ref{lem:keyGamma} ensures that $\sg_i$ and $\sg_{i+1}$ must 
be part of $\Gamma$-match. In case (ii), we know that cell $i$ is 
labeled with $y$.  It follows from condition (2) of Lemma \ref{lem:keyGamma} 
that it can not be that either $s =1$ so that $b_s =b_1$ or that 
$s > 1$ and $\mathrm{last}(b_{s-1}) < \mathrm{first}(b_s)$ because  
those conditions force  that $\sg$ is increasing in $b_s$.  
Thus we must have that $s > 1$ and 
$\mathrm{last}(b_{s-1}) > \mathrm{first}(b_s)$. 
Since $(B,\sg)$ is a fixed point of $J_{\Gamma}$, it cannot 
be that there is a $\Gamma$-match in $\sg$ 
which includes $\mathrm{last}(b_{s-1})$ 
and  $\mathrm{first}(b_s)$ that ends weakly to the left of $\sg_i$ because 
then cell $i$ would satisfy Case I of our definition of $J_\Gamma$ and, 
hence, $(B,\sg)$ would not be a fixed point of $J_\Gamma$. 
Thus the $\Gamma$-match which includes $\mathrm{last}(b_{s-1})$ 
and  $\mathrm{first}(b_s)$ must involve $\sg_i$ and $\sg_{i+1}$. 
However, there can be no $\Gamma$-match that involves $\sg_{c-1}$ and 
$\sg_c$ since $\sg_c=1$ can only play the role of 1 in a $\Gamma$-match 
and each element of $\Gamma$ starts with 1. Thus we must have $\sg_1 =1$.

Next we claim that 2 must be in either cell 2 or cell 3 in $O.$ 
For a contradiction, assume that 2 is in cell $c$ for $c > 3$. 
Then once again $\sg_{c-1} > \sg_c$ so that there must be a $\Gamma$-match 
in $\sg$ that involves the two cells $c-1$ and $c$ in $(B,\sg)$. 
However, In this case, the number which is in cell $c-2$ must be greater than 
$\sg_c$ so that the only possible $\Gamma$-match that involves 2 must start from cell $c$ where 2 plays the role of 1 in the match. Thus there is no $\Gamma$-match in 
$\sg$ that involves $\sg_{c-1}$ and $\sg_c$.

We now have 
have two cases.\\
\ \\
{\bf Case 1.} 2 is in cell 2 of $O$.\\
\ \\
In this case there are two possibilities, namely, either (i) 1 and 2 are both in the first brick $b_1$ of $(B,\sg)$ or (ii) brick $b_1$ is a single cell filled with 1, and 2 is in the first cell of the second brick $b_2$ of $O$.  In either case, we know that 1 is not part of a 
$\Gamma$-match in $\sg$. So if we remove cell 1 from $O$ and subtract $1$ from the elements in the remaining cells, we will obtain a fixed point $O'$ of $J_{\Gamma}$ in $\mathcal{O}_{\Gamma,n-1}.$ 

Moreover, we can create a fixed point $O=(B,\sg) \in \mathcal{O}_n$ of $J_{\Gamma}$ satisfying the three conditions of Lemma \ref{lem:keyGamma} where $\sg_2 =2$ by starting with a fixed point $(B',\sg') \in \mathcal{O}_{\Gamma,n-1}$ of $J_\Gamma$, where $B' =(b_1', \ldots, b_r')$ and $\sg' =\sg_1' \cdots \sg_{n-1}'$, and then letting $\sg = 1 (\sg_1'+1) \cdots (\sg_{n-1}' +1)$, and setting  $B = (1,b_1', \ldots, b_r')$ or setting $B = (1+b_1', \ldots, b_r')$.

It follows that fixed points in Case 1 will contribute $(1-y)U_{\Gamma,n-1}(y)$ to  $U_{\Gamma,n}(y)$.

\ \\
{\bf Case 2.} 2 is in cell 3 of $O=(B,\sg)$. \\
\ \\
Since there is no decrease within the first brick $b_1$ of $O=(B,\sg),$ it must be the case that 2 is in the first cell of brick $b_2$ and there must be either a $14253$-match or a $15243$-match that involves the cells of the first two bricks. Therefore, we know that brick $b_2$ has at least 3 cells. In addition, we claim that 3 is in cell 5 of $O$ since otherwise, 3 must be in some cell $c$ for $c > 6$ and there must be a $\Gamma$-match between the two cells $c-1$ and $c$ in $O.$ By the previous argument, we can see that if 3 is too far away from 1 and 2, then it must play the role of 1 in any match that involves cell $c.$ Thus, the only possible $\Gamma$-match that contains cell $c$ must also start at $c$ and can never involve both cells $c-1$ and $c.$ Also, 3 cannot be in cell 2 nor 4 in $O$ since both $\sg_2$ and $\sg_4$ are greater than 3, due to the $\Gamma$-match starting from cell 1. We now have two subcases depending on whether or not there is a $\Gamma$-match in $O$ starting at cell 3.  

\ \\
{\bf Subcase 2.a.} There is no $\Gamma$-match in $O$ starting at cell 3. \\
\ \\  
In this case, we first choose a number $x$ to fill in cell 2 of $O.$ There are $n-3$ choices
for $x$.
 For each choice of $\sg_2 =x$, we let $d$ be the smallest of the remaining numbers, that is, 
$$d = \min\left( \{1,2, \ldots,n\} - \{1,2,3,\sg_2\} \right).$$ 
We claim that $d$ must be either in cell 4 or cell 6 in $(B,\sg)$. First, $d$ cannot 
be in cell 7 since otherwise there would be a $\Gamma$-match in $\sg$ starting 
at cell 3.  Next $d$ cannot be a cell $c$ where $c >7$ since otherwise 
$\sg_{c-1} >\sg_c =d$ which means that there must be a $\Gamma$-match in $\sg$ which 
includes both $\sg_{c-1}$ and $\sg_c$.  However, in the case, we would 
also have $\sg_{c-2}> \sg_c$ which implies the only role that $\sg_c$ can play 
in a $\Gamma$-match is 1. 

This leaves us with three possibilities which are pictured in Figure 
\ref{fig:3poss}. That is, either (i) $d$ is in cell 4, (ii) $d$ is in 
cell $6$ and is in brick $b_2$ or (iii) $d$ is in cell $6$, but is the 
first element of brick $b_3$. In case (i), we can remove that first four 
cells from $B$, reduce the remaining elements of $\sg$ to obtain a 
permutation $\alpha \in S_{n-4}$, and let $B'=(b_2-2,b_3, \ldots,b_k)$ to 
obtain a fixed point $(B',\alpha)$ of $J_{\Gamma}$ of size $n-4$. 
Such fixed points will contribute $-y^2U_{\Gamma,n-4}(y)$ to $U_{\Gamma,n}(y)$. 
In case (ii), we have $(n-5)$ ways to choose the element $z$ in 
cell 4. Then we can remove that first five cells  
cells from $B$, reduce the remaining elements of $\sg$ to obtain a 
permutation $\alpha \in S_{n-5}$, and let $B'=(b_2-3,b_3, \ldots,b_k)$ to 
obtain a fixed point $(B',\alpha)$ of $J_{\Gamma}$ of size $n-5$. 
Such fixed points will contribute $-y^2U_{\Gamma,n-5}(y)$ to $U_{\Gamma,n}(y)$. 
In case (iii), we have $(n-5)$ ways to choose the element $z$ in 
cell 4. Then we can remove that first five cells  
cells from $B$, reduce the remaining elements of $\sg$ to obtain a 
permutation $\alpha \in S_{n-5}$, and let $B'=(b_2-3,b_3, \ldots,b_k)$ to 
obtain a fixed point $(B',\alpha)$ of $J_{\Gamma}$ of size $n-5$. 
Such fixed points will contribute $y^3U_{\Gamma,n-5}(y)$ to $U_{\Gamma,n}(y)$. 
Therefore, the total contribution of the fixed points from Subcase 2.a. is $$ -y^2(n-3)\left( U_{\Gamma, n-4}(y) +(1-y)(n-5)U_{\Gamma, n-5}(y)  \right). $$

\begin{figure}[h]
  \begin{center}
   \includegraphics[width=0.25\textwidth]{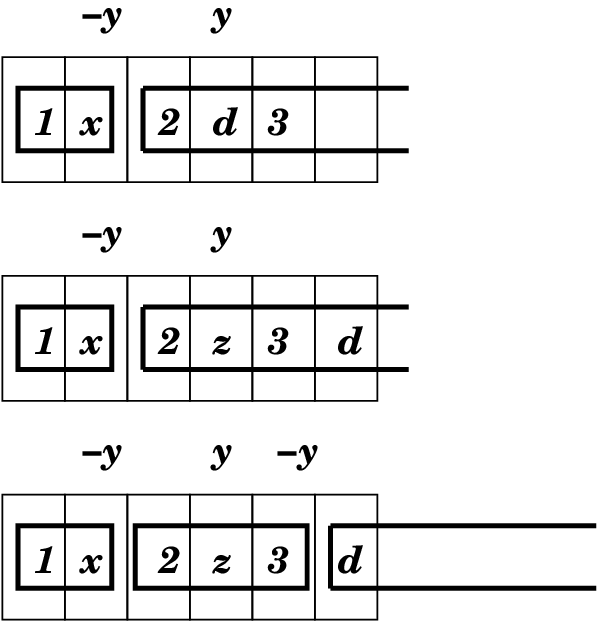}
   \caption{The possible choice for $d$ in Subcase 2a.}
   \label{fig:3poss}
  \end{center}
\end{figure}

\noindent 
{\bf Subcase 2.b.} There is a $\Gamma$-match in $O$ starting at cell 3. \\
\ \\ 
In this case, we first choose a number $x$ to fill in cell 2 of $O.$ There are $n-3$ choices
for $x$. For each choice of $\sg_2,$ let 
$$d = \min\left( \{1, \ldots,n\}- \{1,2,3,\sg_2\} \right).$$
Then we claim that $d$ must be in cell $7$.  That is, we can argue as in 
Subcase 2a that it cannot be that $d$ in cell $c$ for $c >7$.  But 
since there is a $\Gamma$-match starting at cell $3$ we know 
$\sg_4 > \sg_7$ and $\sg_6 > \sg_7$ so that $d$ cannot be in cells 4 or 6. 
We then have $(n-5)(n-6)$ ways to choose $\sg_4 =z$ and $\sg_6 =a$. 

Next by condition (b) of Lemma \ref{lem:keyGamma}, we 
know that each brick in $b$ in $B$ can contain at most one descent. Since 
we know that $b_2$ must have size at least 3 because there is a $\Gamma$-match in $\sg$
starting at cell 1 which is contained in $b_1$ and $b_2$, this means that either 
$b_2 =3$ or $b_2=4$. We claim that $b_2$ is of size $4$.  That is, 
if $b_2 =3$, then either (I) $a > d$ are in $b_3$ or (II) brick $b_3$ contains 
a single cell containing $a$ and $d$ is the first cell of $b_4$.  Case (I) 
cannot happen because then $\mathrm{last}(b_2) =3 < \mathrm{first}(b_3) =a$ which 
implies that the elements in $b_3$ must be increasing by  
condition (a) of Lemma \ref{lem:keyGamma}.  Case (II) cannot happen 
because that  $\mathrm{last}(b_3) =a > \mathrm{first}(b_4) =d$ which implies 
there must be a $\Gamma$-match contained in the cells of $b_3$ and $b_4$ 
which involves both $\sg_6 =a$ and $\sg_7 =d$ which is impossible since 
$a > d$.  Thus we are in the situation pictured in Figure \ref{fig:Subcase2b}.

\begin{figure}[h]
  \begin{center}
   \includegraphics[width=0.25\textwidth]{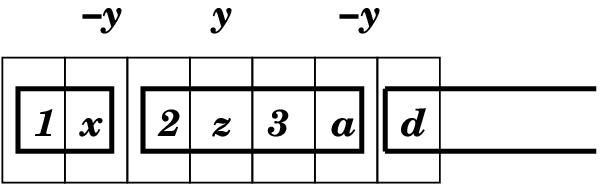}
   \caption{Subcase 2b.}
   \label{fig:Subcase2b}
  \end{center}
\end{figure}

Then we can remove that first six cells  
cells from $B$, reduce the remaining elements of $\sg$ to obtain a 
permutation $\alpha \in S_{n-6}$, and let $B'=(b_3, \ldots,b_k)$ to 
obtain a fixed point $(B',\alpha)$ of $J_{\Gamma}$ of size $n-6$. 
Such fixed points will contribute $(n-3)(n-5)(n-6)y^3U_{\Gamma,n-6}(y)$ to $U_{\Gamma,n}(y)$.

In total, we obtain the recursion for $U_{\Gamma,n}(y)$ as follows. \begin{align*} 
\displaystyle  U_{\Gamma,n}(y) & = (1-y)U_{\Gamma,n-1}(y)  -y^2(n-3)\left(U_{\Gamma, n-4}(y) +(1-y)(n-5)U_{\Gamma, n-5}(y)  \right)+ \\
& \qquad \qquad \qquad \qquad \qquad y^3(n-3)(n-5)(n-6)U_{\Gamma,n-6}(y).
\end{align*} 
This proves Theorem \ref{thm:1-2-3}. \qed

Using Theorem \ref{thm:1-2-3}, we computed the initial values of 
the $U_{\Gamma,n}(y)$s which are given in Table 1.

\begin{table}[h]
\begin{center}
\begin{tabular}{c|l}
n & $U_{\Gamma,n}(y)$ \\ 
\hline 1 & $-y$\\
2 & $-y + y^2$ \\
3 & $-y + 2y^2 - y^3$\\
4 & $-y + 3 y^2 - 3 y^3 + y^4$\\
5 & $-y + 4 y^2 -4y^3 + 4 y^4 -y^5$\\
6 & $-y + 5y^2 -2y^3 + 2y^4 -5y^5 + y^6$ \\ 
7 & $-y + 6 y^2 + 5y^3 -28 y^4 + 5 y^5 + 6 y^6 - y^7$ \\
8 & $-y + 7 y^2 + 19 y^3 - 123y^4 + 123y^5 -19 y^6 -7 y^7 + y^8$ \\ 
9 & $-y + 8 y^2 + 42y^3 -334y^4 + 588 y^5 -334y^6 + 42y^7 + 8 y^8 - y^9$  \\ 
10 & $-y + 9 y^2 + 76y^3 -726y^4 + 1606y^5 -1606y^6 + 726 y^7 - 76 y^8 -
 9 y^9 + y^{10}$  
\end{tabular} 
\end{center}
\caption{The polynomials $U_{\Gamma,n}(-y)$ for $\Gamma=\{14253,15243\}$} \label{tab:Gamma}
\end{table}

Using these initial values of 
the $U_{\Gamma,n}(y)$s, one can then compute the initial values of 
$NM_{\Gamma,n}(x,y)$ which are given in Table 2.

\begin{table}[h]
\begin{center}
\begin{tabular}{c|l}
n & $NM_{\Gamma,n}(x,y)$ \\ 
\hline 1 & $xy$\\
2 & $xy+x^2y^2$ \\
3 & $x y+x y^2+3 x^2 y^2+x^3 y^3$\\
4 & $x y+4 x y^2+7 x^2 y^2+x y^3+4 x^2 y^3+6 x^3 y^3+x^4 y^4$\\
5 & $x y+11 x y^2+15 x^2 y^2+9 x y^3+30 x^2 y^3+25 x^3 y^3+x y^4+5 x^2 y^4+10 x^3 y^4+10 x^4 y^4+x^5 y^5$\\
6 & $x y+26 x y^2+31 x^2 y^2+58 x y^3+146 x^2 y^3+90 x^3 y^3+22 x y^4+79 x^2 y^4+120 x^3 y^4+ $ \\ 
& $65 x^4 y^4+x y^5+6 x^2 y^5+15 x^3 y^5+20 x^4 y^5+15 x^5
y^5+x^6 y^6$ \\
7 & $x y+57 x y^2+63 x^2 y^2+282 x y^3+588 x^2 y^3+301 x^3 y^3+252 x y^4+770 x^2 y^4+ $ \\
& $896 x^3 y^4+350 x^4 y^4+51 x y^5+210 x^2 y^5+364 x^3 y^5+350 x^4
y^5+140 x^5 y^5+$\\
& $x y^6+7 x^2 y^6+21 x^3 y^6+35 x^4 y^6+35 x^5 y^6+21 x^6 y^6+x^7 y^7$
\end{tabular} 
\end{center}
\caption{The polynomials $MN_{\Gamma,n}(x,y)$ for $\Gamma=\{14253,15243\}$} \label{tab:NMGamma}
\end{table}

\section{The generating function $U_{142536}(t,y)$.} \label{sec:142536}

 In this section, we shall study the generating function 
$U_\tau(t,y)$ where $\tau = 142536$. We let $J_\tau$ denote the 
involution $J_\Gamma$ from Section 3 where $\Gamma =\{\tau\}$.

We claim that the polynomials 
$$U_{\tau,n }(y) = \sum_{O \in \mathcal{O}_{\tau,n}, J_{\tau}(O) =O} \sgn{O} W(O)$$ 
satisfy the following properties: \begin{enumerate}
\item $U_{\tau,1}(y)=-y$, and 
\item for $n \geq 2,$  
\begin{align*}
\displaystyle U_{\tau,n}(y) = &~~~  (1-y)U_{\Gamma,n-1}(y) +  \sum_{k=0}^{\Floor[(n-8)/6]} 
\mathrm{det}(M_{k+1}) y^{3k+3}U_{n-6k-7}(y) \\
& ~~~\qquad  + \sum_{k=0}^{\Floor[(n-6)/6]} \mathrm{det}(P_{k+1})(-y^{3k+2})
\left[U_{\tau,n-6k-4}(y) +yU_{\tau,n-6k-5}(y) \right].
\end{align*}
\end{enumerate}  

It is easy to see when $n=1,$ the only fixed point comes from 
brick tabloid that has a single brick of size 1 which contains 1 and 
the label on cell 1 is $-y.$ Thus 
$U_{\tau,1}(y)=-y.$

For $n \geq 2,$ let $O=(B,\sg)$ be a fixed point of $I_{\Gamma}$ where $B=(b_1, \ldots, b_k)$ and $\sg=\sg_1 \cdots \sg_n$. 
First we show that 1 must be in the first cell of $B$.  That is, if 
$1 = \sg_c$ where $c > 1$, then $\sg_{c-1} > \sg_c$. We claim 
that whenever we have a descent $\sg_i > \sg_{i+1}$ in $\sg$, then 
$\sg_i$ and $\sg_{i+1}$ must be part of a $\tau$-match in $\sg$. That is, 
it is either the case that (i) there are bricks $b_s$ and $b_{s+1}$ such that 
$\sg_i$ is the last cell of $b_s$ and $\sg_{i+1}$ 
is the first cell of $b_{s+1}$ or (ii) there is a 
brick $b_s$ that contains both $\sg_i$ and $\sg_{i+1}$. In case (i), condition 
3 of Lemma \ref{lem:keyGamma} ensures that $\sg_i$ and $\sg_{i+1}$ must 
be part of $\tau$-match. In case (ii), we know that cell $i$ is 
labeled with $y$.  It follows from condition (2) of Lemma \ref{lem:keyGamma} 
that it can not be that either $s =1$ so that $b_s =b_1$ or that 
$s > 1$ and $\mathrm{last}(b_{s-1}) < \mathrm{first}(b_s)$ because  
those conditions force  that $\sg$ is increasing in $b_s$.  
Thus we must have that $s > 1$ and 
$\mathrm{last}(b_{s-1}) > \mathrm{first}(b_s)$. 
Since $(B,\sg)$ is a fixed point of $J_{\tau}$, it cannot 
be that there is a $\tau$-match in $\sg$ 
which includes $\mathrm{last}(b_{s-1})$ 
and  $\mathrm{first}(b_s)$ that ends weakly to the left of $\sg_i$ because 
then cell $i$ would satisfy Case I of our definition of $J_\tau$ and, 
hence, $(B,\sg)$ would not be a fixed point of $J_\tau$. 
Thus the $\tau$-match which includes $\mathrm{last}(b_{s-1})$ 
and  $\mathrm{first}(b_s)$ must involve $\sg_i$ and $\sg_{i+1}$. 
However, there can be no $\tau$-match that involves $\sg_{c-1}$ and 
$\sg_c$ since $\sg_c=1$ can only play the role of 1 in $\tau$-match 
and $\tau$ starts with 1. Thus we must have $\sg_1 =1$.

Next we claim that 2 must be in either cell 2 or cell 3 in $O.$ 
For a contradiction, assume that 2 is in cell $c$ for $c > 3$. 
Then once again $\sg_{c-1} > \sg_c$ so that there must be a $\tau$-match 
in $\sg$ that involves the two cells $c-1$ and $c$ in $(B,\sg).$ However, since 2 is too far from 1 in $B,$ the only possible 142536-match that involves 2 must start from cell $c$ where 2 plays the role of 1 in the match. We then 
have two cases.\\
\ \\
{\bf Case 1.} 2 is in cell 2 of $O$.\\
\ \\
In this case, there are two possibilities, namely, either (i) 1 and 2 are both in the first brick $b_1$ of $(B,\sg)$ or (ii) brick $b_1$ is a single cell filled with 1 and 2 is in the first cell of the second brick $b_2$ of $(B,\sg)$.  In either case, we know that 1 is not part of a $\tau$-match in $(B,\sg)$. So if we remove cell 1 from $(B,\sg)$ and subtract $1$ from the elements in the remaining cells, we will obtain a fixed point $(B',\sg')$ of $J_{\Gamma}$ in $\mathcal{O}_{\Gamma,n-1}.$ 

Moreover, we can create a fixed point $O=(B,\sg) \in \mathcal{O}_n$ satisfying the three conditions of Lemma \ref{lem:keyGamma} where $\sg_2 =2$ by starting with a fixed point $(B',\sg') \in \mathcal{O}_{\Gamma,n-1}$ of $J_\Gamma$, where $B' =(b_1', \ldots, b_r')$ and $\sg' =\sg_1' \cdots \sg_{n-1}'$, and then letting $\sg = 1 (\sg_1'+1) \cdots (\sg_{n-1}' +1)$, and setting  $B = (1,b_1', \ldots, b_r')$ or setting $B = (1+b_1', \ldots, b_r')$.\\

It follows that fixed points in Case 1 will contribute 
$(1-y)U_{\Gamma,n-1}(y)$ to $U_{\Gamma,n}(y)$. \\
\ \\
{\bf Case 2.} 2 is in cell 3 of $O=(B,\sg)$. \\
\ \\
Since there is no decrease within the first brick $b_1$ of $O=(B,\sg),$ it must be the case that 2 is in the first cell of brick $b_2$ and there must be a 142536-match that involves the cells of the first two bricks. Therefore, we know that brick $b_2$ has at least 4 cells.

To analyze this case, it will be useful to picture $O=(B,\sg)$ as a 2-line array $A(O)$ where the elements in the $i$-th column are $\sg_{2i-1}$ 
and $\sg_{2i}$ reading from bottom to top.  In $A(O)$, imagine 
the we draw an directed arrow from the cell containing $i$ to the cell 
containing $i+1$.  Then it is easy to see that a $\tau$-match 
correspond to block of points as pictured in Figure \ref{fig:tauarray}

\begin{figure}[h]
  \begin{center}
   \includegraphics[width=0.10\textwidth]{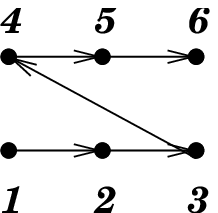}
   \caption{A $142536$-match as a 2-line array.}
   \label{fig:tauarray}
  \end{center}
\end{figure}

Now imagine that $A(0)$ starts with series of $\tau$-matches starting 
at positions $1,3,5, \ldots$. We have pictured this situation at 
the top of Figure \ref{fig:142536}. Now consider the brick 
structure of $O=(B,\sg)$. Since the elements of $b_1$ must be 
increasing and $\sg_2 > \sg_3$, it must be the case that $b_1 =2$ and 
$b_2 \geq 4$. We claim that $b_2 =4$ because if $b_2 > 4$, 
then $\sg_6 < \sg_7$ would be a descent in $b_2$. 
Thus cell 6 would be labeled with a $y$. The $\tau$-match 
starting at cell 1 ends a cell $6$ so that cell 6 would satisfy  Case I of our definition of $J_\tau$ which 
contracts that the fact that  $O=(B,\sg)$ is a fixed point of $J_\tau$. 
Now the fact that $\sg_6 > \sg_7$ implies that $b_3 \geq 2$ since 
there must be a $\tau$-match that involves $\sg_6$ and $\sg_7$. Now if 
there is a $\tau$-match starting at cell 7, then we can see 
that $\sg_8 > \sg_9$.  It cannot be that $\sg_8$ and $\sg_9$ are 
both in $b_3$ because it would follow that cell 8 would be labeled 
with a $y$ and the $\tau$-match starting at $\sg_3$ would 
end at cell 8. Thus cell 8  would be in Case I of our definition of $J_\tau$ 
which contracts that the fact that  $O=(B,\sg)$ is a fixed point of $J_\tau$. 
Thus it must be the case that $b_3 =2$. But the $\tau$-match 
starting at cell 7 forces $\sg_8 > \sg_9$ so that there 
is a decrease between $\mathrm{last}(b_3)$ and $\mathrm{first}(b_4)$ which 
implies that there is $\tau$ contained in $b_3$ and $b_4$, which then means 
that $b_4 \geq 4$. Now if there is a $\tau$-matches starting 
at $\sg_9$, then it must be the case that $\sg_{12} > \sg_{13}$. Hence, 
it cannot be $b_4 > 4$ since otherwise cell 12 is labeled with a $y$. 
Since the $\tau$-match starting a cell 7 ends at cell 12, then cell 12  
would be in Case I of our definition of $J_\tau$ 
which contracts that the fact that  $O=(B,\sg)$ is a fixed point of $J_\tau$. 
Thus it must be the case that $b_4 =4$.  We can continue 
to reason in this way to conclude that if 
there are $\tau$-matches starting at cells $1,3,7,9, \ldots, 6k+1,6k+3$, 
then $b_{2i-1} =2$ for $i =1, ,2k+1$ and $b_{2i}=4$ for $i =1, 
\ldots, 2k$. Similarly, if there  are $\tau$-matches starting at cells 
$1,3,7,9, \ldots, 6k+1$ but no $\tau$-match starting at cell $6k+3$, 
then $b_{2i-1} =2$ for $i =1, ,2k$ and $b_{2i}=4$ for $i =1, 
\ldots, 2k-1$ and $b_{2k} \geq 4$. 

\begin{figure}[h]
  \begin{center}
   \includegraphics[width=0.30\textwidth]{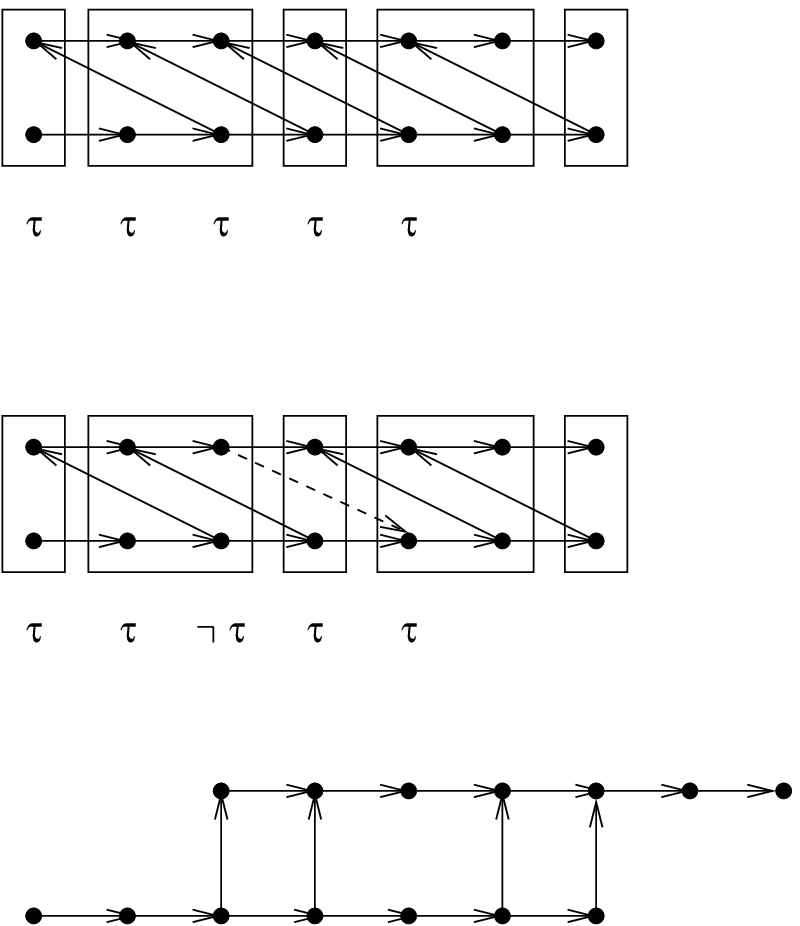}
   \caption{Fixed points that start with series of $\tau$-matches.}
   \label{fig:142536}
  \end{center}
\end{figure}

Note that our arguments above did not use the fact that there 
were $\tau$-matches starting at cells $5, 11, \ldots$.  Indeed, 
these matches are not necessary to force the brick structure described 
above. For example, suppose that there were no $\tau$-match 
starting at cell $5$ but there where $\tau$-matches starting at 
cell 7.  We have pictured this situation on the second 
line of Figure \ref{fig:142536} where we have written 
$\neg \tau$ below the position corresponding to cell 5 to indicate 
that there is not a $\tau$-match starting a cell 5. 
Then one can from the diagram pictured in the second line 
of Figure \ref{fig:142536}, that it must be the case that 
$\sg_6 < \sg_{9}$. It follows that if one looks at the requirements 
on $\sg$ to start with such a series of $\tau$-matches, then 
$\sg$ must be a linear extension of poset whose Hasse diagram is 
pictured at the bottom of Figure \ref{fig:142536}.

There are now two cases depending on where the sequence 
of $\tau$-matches starting at positions $1,3,7,9, \ldots $ ends. \\
\ \\
{\bf Case 2.1.}  There are $\tau$-matches in $\sg$ starting 
at positions $1,3,7,9, \ldots , 6k+3$, but there is no $\tau$-match 
starting at position $6k+7$. This situation is pictured 
in Figure \ref{fig:142536A} in the case where $k=2$. 

In this case, we claim that $\{\sg_1, \ldots, \sg_{6k+8}\} = \{1, 2, \ldots, 6k+8\}$.  If not, then $i$ be the least element in 
$\{1, 2, \ldots, 6k+8\} -\{\sg_1, \ldots, \sg_{6k+8}\}$. The question then 
becomes for which $j$ is $\sg_j =i$. It easy to see from the diagram at 
the top of  Figure \ref{fig:142536A}, that $\sg_{6k+8} > \sg_r$ for 
$r =1, \ldots, 6k+7$. This implies that $\sg_{6k+8} \geq  6k+8$. But since 
$i \in \{1, 2, \ldots, 6k+8\} -\{\sg_1, \ldots, \sg_{6k+8}\}$, it must 
be the case that $\sg_{6k+8} > 6k+8 \geq i$. 

We claim that $j$ cannot equal $6k+9$.  That is, if 
$i = 6k+9$, then $\sg_{6k+8} > \sg_{6k+9}$.  It cannot be 
that $\sg_{6k+8}$ and $\sg_{6k+9}$ are in brick $b_{2k+3}$ because 
then $\sg_{6k+8}$ is labeled with $y$ and there is a $\tau$-match 
contained in bricks $b_{2k+2}$ and $b_{2k+3}$ that ends before 
cell $6k+8$ which means that cell $6k+8$ satisfies Case 1 of our 
definition of $J_{\tau}$ which violates our assumption that $(B,\sg)$ is 
fixed point of $J_{\tau}$.  If $\sg_{6k+9}$ starts brick 
$b_{2k+4}$, then brick $b_{2k+3}$ must be of size 2 and 
there must be a $\tau$-match contained in 
bricks $b_{2k+3}$ and $b_{2k+4}$ that involves  $\sg_{6k+8}$ and $\sg_{6k+9}$. 
But since $\sg_{2k+8}> \sg_{2k+9}$, that $\tau$-match can only start at cell $6k+7$ 
which violates our assumption in this case. 

Next we claim that $j$ cannot be $\geq 6k+10$.  That is, if 
$j \geq  6k+10$, then both $\sg_{j-2}$ and $\sg_{j-1}$ are greater than 
$\sg_j =i$. Thus 
$\sg_{j-1}$ and $\sg_{j}$ must be part of $\tau$-match in $\sg$. 
But then  
the elements in two cells before cell $j$ are bigger than 
that in cell $j$ which means that the only 
role that $\sg_j$ can play in a $\tau$-match
is 1. Thus there can be no $\tau$-match that includes 
$\sg_{j-1}$ and $\sg_{j}$.

\begin{figure}[h]
  \begin{center}
   \includegraphics[width=0.40\textwidth]{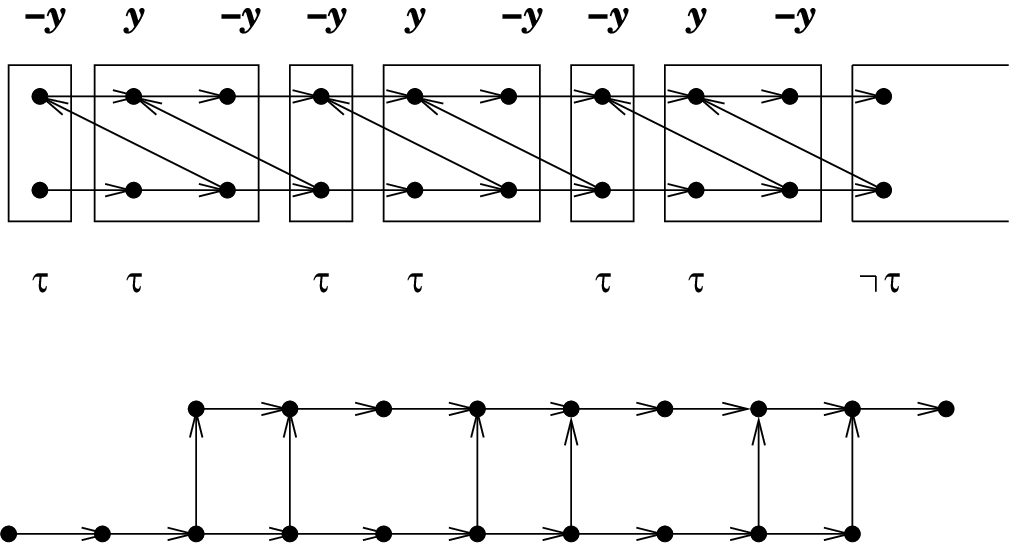}
   \caption{Fixed points that start with series of $\tau$-matches in Case 2.1.}
   \label{fig:142536A}
  \end{center}
\end{figure}

Let $\alpha$ be the permutation 
that is obtained from $\sg$ by removing the elements $1, \ldots, 6k+7$ and 
subtracting $6k+7$ from the remaining elements. Let $B'$ be the 
brick structure $(b_{2k+3}-1,b_{2k+4},\ldots, b_{k})$. Then it is easy to 
see that $(B',\alpha)$ is a fixed point of $J_{\tau}$ is size 
$n-6k-7$. 

Vice versa, suppose we 
start with a fixed point $(B',\alpha)$ of $J_{\tau}$ whose size 
$n-6k-7$ where 
$B' =(d_1,d_2, \ldots, d_s)$. Then we can obtain 
a fixed point $(B,\sg)$ of size $n$ which 
has $\tau$-matches in $\sg$ starting 
at positions $1,3,7,9, \ldots , 6k+3$, but no $\tau$-match 
starting at position $6k+7$ by letting $\sg_1 \ldots \sg_{6k+7}$ be any 
permutation of $1, \ldots, 6k+7$ which is a linear extension of 
the poset whose Hasse diagram is pictured at the bottom of 
Figure \ref{fig:142536A} and letting $\sg_{6k+8} \ldots \sg_n$ be the sequence that 
results by adding  $6k+7$ to each element of $\alpha$.  Then let 
$B=(b_1, \ldots, b_{2k+2}, d_1+1,d_2, \ldots, d_s)$ where $b_{2i+1} =2$ for 
$i =0, \ldots, k$ and $b_{2i}=4$ for $i =1, \ldots, k+1$.

It follows that contribution to $U_{\tau,n}(y)$ from the fixed points in Case 2.1 equal 
$$\sum_{k=0}^{\lfloor \frac{n-8}{6} \rfloor} 
G_{6k+7} y^{3k+3}U_{\tau,n-6k-7},$$
where $G_{6k+7}$ is the number of linear extensions of the poset pictured 
at the bottom of Figure \ref{fig:142536A} of size $6k+7$.

Next we want to compute the number of linear extensions of $G_{6k+7}$. 
It is easy to see that the left-most two elements at the bottom of 
the Hasse diagram of $G_{6k+7}$ must be first two elements of the linear 
extension and the right-most element at the top of the Hasse diagram must be 
the largest element in any linear extension of 
$G_{6k+7}$. Thus the number of linear extensions of $\bar{G}_{6k+4}$ which 
is the Hasse diagram of  $G_{6k+7}$ with those three elements removed, equals 
the number of linear extension of $G_{6k+7}$.  
We have pictured the Hasse diagrams of $\bar{G}_4$, $\bar{G}_{10}$ and $\bar{G}_{16}$ in 
Figure \ref{fig:barG}.

\begin{figure}[h]
  \begin{center}
   \includegraphics[width=0.40\textwidth]{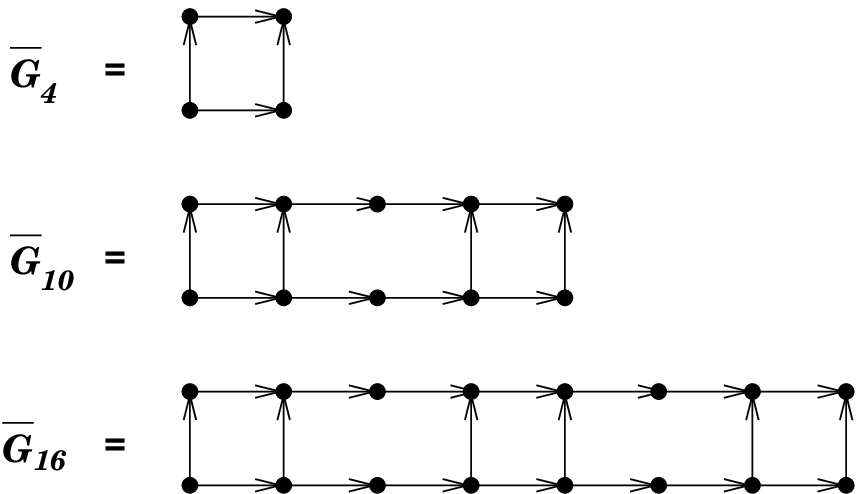}
   \caption{The Hasse diagram of $\bar{G}_{6k+4}$ for $k=0,1,2$.}
   \label{fig:barG}
  \end{center}
\end{figure}

Now let $A_0=1$ and 
$A_{k+1}$ be the number of linear extensions of $\bar{G}_{6k+4}$ for $k \geq 0$. 
It is easy to see that $A_1 =2$.  There is a natural recursion satisfied 
by the $A_k$, namely, for $k > 1$, 
\begin{equation}\label{Arec}
A_{k+1}= \sum_{j=0}^{k} C_{2+3j} A_{k-j}
\end{equation}
where $C_n = \frac{1}{n+1}\binom{2n}{n}$ is the $n$-th Catalan number. 
First, consider the number of linear extensions of the Hasse diagram 
of the poset $D_n$ with $n$ columns of the type pictured in Figure \ref{fig:standard}. 
It is easy to see that this is the number of standard tableaux of 
shape $(n^2)$ which is well known to equal to $C_n$.

\begin{figure}[h]
  \begin{center}
   \includegraphics[width=0.35\textwidth]{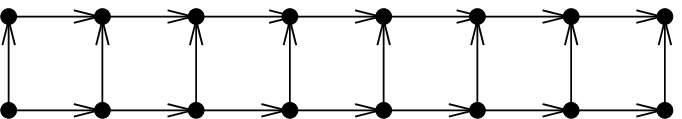}
   \caption{The Hasse diagram of $D_n$.}
   \label{fig:standard}
  \end{center}
\end{figure}

Next if we look at the Hasse diagram of $\bar{G}_{6k+4}$ it is easy to see 
that there are no relation that is forced between 
the elements in columns $3i$ for $i =1, \ldots, k$.  Now suppose 
that we partition the set of linear extensions of  $\bar{G}_{6k+4}$ by 
saying the bottom element in column 3i is less than the top element 
in column $3i$ for $i =1, \ldots, j$ and the top element of column 
$3j+3$ is less than the bottom elements of column $3j+3$.  Then we will have a situation as 
pictured in Figure \ref{fig:partition1} in the case where $k =6$ and $j=2$. 
One can see that when one straightens out the resulting Hasse diagram, it 
starts with the Hasse diagram of $D_{2+3j}$ and all those elements must be 
less than the elements in the top part of Hasse diagram which is a copy of 
the Hasse diagram of $\bar{G}_{6(k-j-1)+4}$.

\begin{figure}[h]
  \begin{center}
   \includegraphics[width=0.60\textwidth]{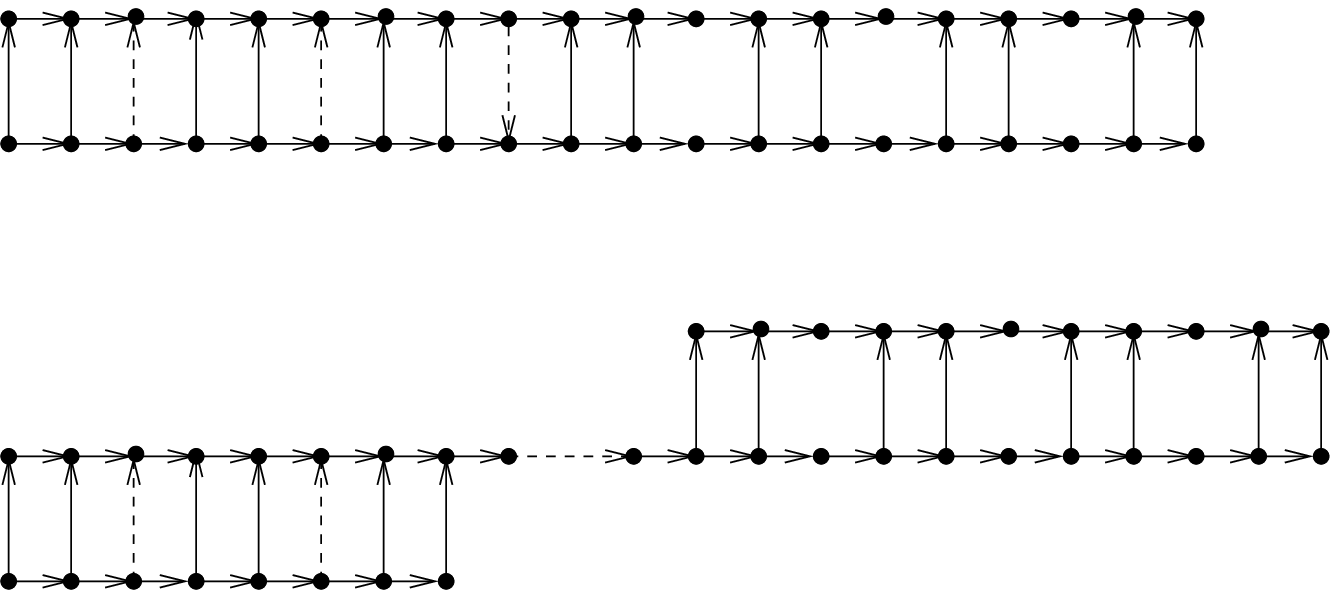}
   \caption{Partitioning the Hasse Diagram of $\bar{G}_{6k+4}$.}
   \label{fig:partition1}
  \end{center}
\end{figure}

Now consider the determinant of the $n \times n$ matrix $M_{n}$ whose elements 
on the main diagonal are $C_2$, the elements on the $j$-diagonal above 
the main are $C_{2+3j}$ for $j \geq 1$, the elements on the sub-diagonal are $-1$, and 
the elements below the sub-diagonal are 0. For example we have pictured 
in $M_7$ in Figure \ref{fig:matrix}. It is then easy to see that 
$\mathrm{det}(M_1) = C_2 =2$. For $n >1$ if we expand the determinant by minors 
about the first row, then we see that we have the recursion 

\begin{equation}\label{Arec}
\mathrm{det}(M_k)= \sum_{j=0}^{k-1} C_{2+3j} \mathrm{det}(M_{k-j-1}),
\end{equation}
where we set $\mathrm{det}(M_0)=1$.

For example, suppose that we expand the determinant $M_7$ pictured 
in Figure \ref{fig:matrix} about the element of 
$C_8$ in the first row. Then in the next two rows, we are forced to 
expand about the $-1$'s.  It is easy to see that the total sign of 
these expansion is always $+1$ so that in this case, we would get a 
contribution of 
$C_8 \mathrm{det}(M_4)$ to $\mathrm{det}(M_7)$.

\begin{figure}[h]
  \begin{center}
   \includegraphics[width=0.25\textwidth]{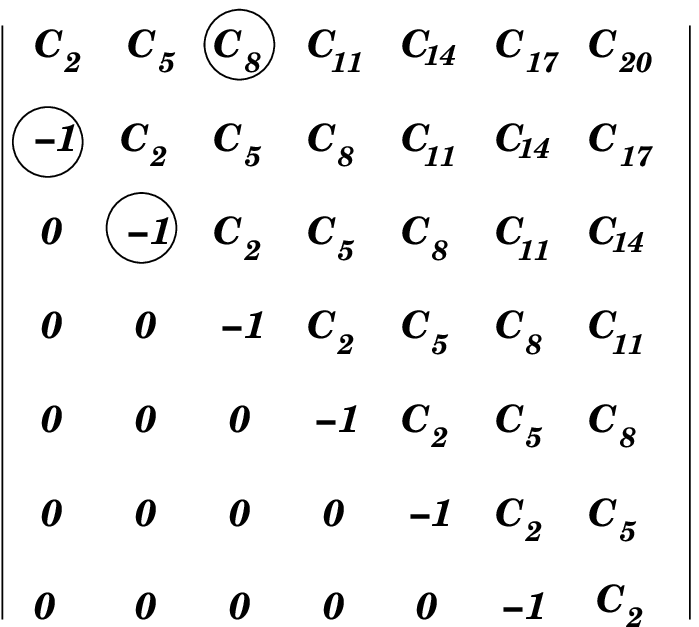}
   \caption{The matrix $M_7$.}
   \label{fig:matrix}
  \end{center}
\end{figure}

Thus it follows that $A_n = \mathrm{det}(M_n)$ for all $n$.

Hence the contribution to $U_{\tau,n}$ from the fixed points in Case 1 equals 
$$\sum_{k=0}^{\lfloor \frac{n-8}{6} \rfloor} 
 \mathrm{det}(M_{k+1}) y^{3k+3}U_{\tau,n-6k-7}.$$
\ \\
{\bf Case 2.2}  There are $\tau$-matches in $\sg$ starting 
at positions $1,3,7,9, \ldots , 6k+1$, but there is no $\tau$-match 
starting at position $6k+3$. This situation is pictured 
in Figure \ref{fig:142536B} in the case where $k=3$.

\begin{figure}[h]
  \begin{center}
   \includegraphics[width=0.40\textwidth]{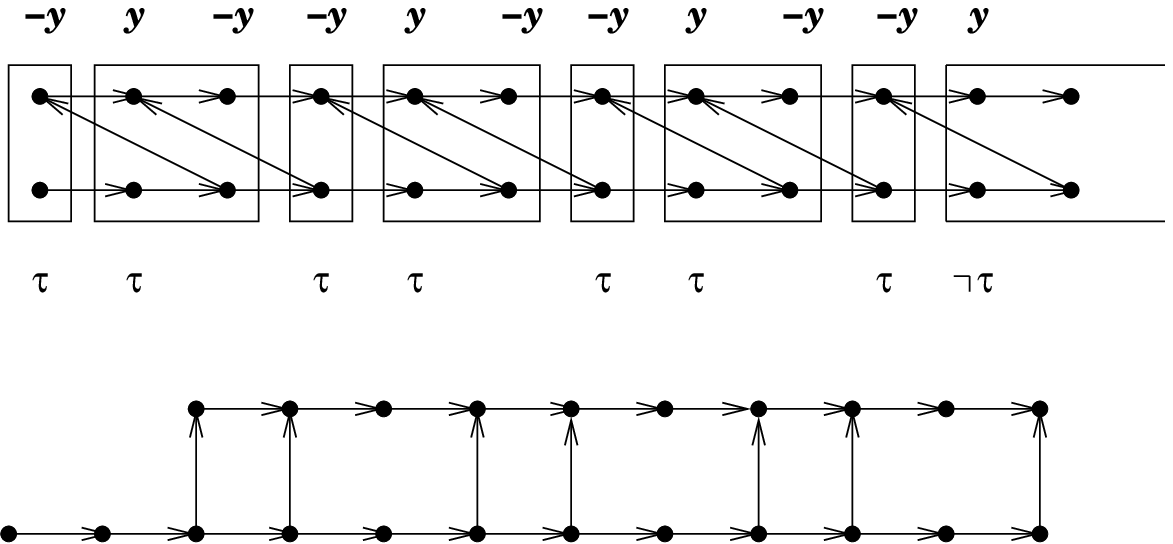}
   \caption{Fixed points that start with series of $\tau$-matches in Case 2.2.}
   \label{fig:142536B}
  \end{center}
\end{figure}

In this case, we claim that $\{\sg_1, \ldots, \sg_{6k+5}\} = \{1, 2, \ldots, 6k+5\}$.  If not, then let $i$ be the least element in 
$\{1, 2, \ldots, 6k+5\} -\{\sg_1, \ldots, \sg_{6k+5}\}$. The question then 
becomes for which $j$ is $\sg_j =i$.  It easy to see from the diagram at 
the top of  Figure \ref{fig:142536B}, that $\sg_{6k+6} > \sg_r$ for 
$r =1, \ldots, 6k+5$ and that $\sg_{6k+5} > \sg_r$ for 
$r =1, \ldots, 6k+5$. This implies that $\sg_{6k+5} \geq 6k+5$, but since 
$i \in \{1, 2, \ldots, 6k+5\} -\{\sg_1, \ldots, \sg_{6k+5}\}$, it follows 
that $6k+5 < \sg_{6k+5} < \sg_{6k+6}$. 
  
It cannot be that $i =\sg_{6k+7}$ because then $\sg_{6k+6}>\sg_{6k+7}$. 
Note that $\sg_{6k+3},\sg_{6k+4},\sg_{6k+5},\sg_{6k+6}$ are elements 
of brick $b_{2k+2}$. If $\sg_{6k+7}$ was also and element of 
brick $b_{2k+2}$, then $\sg_{6k+6}$ would be marked with a $y$ and there 
is a $\tau$-match contained in bricks $b_{2k+1}$ and $b_{2k+2}$ that 
ends at cell $6k+6$ so that we could apply Case 1 of the involution 
$J_{\tau}$ at cell $6k+6$, which violates our assumption 
that $(B,\sg)$ was a fixed point of $J_{\tau}$.  If $\sg_{6k+7}$ starts 
brick $b_{2k+3}$, then there must be a $\tau$-match that involves  
$\sg_{6k+6}$ and $\sg_{6k+7}$ and is contained in bricks $b_{2k+2}$ and $b_{2k+3}$.
Since we are assuming that there is no 
 $\tau$-match cannot starting at $\sg_{6k+3}$, it must be the case that there 
is a $\tau$-match starting at 
$\sg_{6k+5}$.  But then we have that situation pictured in Figure 
\ref{fig:142536BB}. In Figure \ref{fig:142536BB}, the 
 dark arrows are forced by the $\tau$-matches starting 
at $\sg_{6k+1}$ and $\sg_{6k+5}$.  However the top two elements 
in brick $b_{2k+2}$ are $\sg_{6k+5}$ and $\sg_{6k+6}$, which are both 
greater than $i$. This means that the dotted arrow is forced which implies 
that there is a $\tau$-match starting at cell $\sg_{6k+3}$. 

Finally, it cannot be the case that $j > 6k+7$, because then 
it must be the case that $\sg_{j-1} > \sg_j$ so that $\sg_{j-1}$ and 
$\sg_j$ must be part of a $\tau$-match in $\sg$. But in this situation, the 
elements $1, \ldots, i-1$ lie in cells that are more than 2 cells away 
from the cell containing $i$.  This means that in any $\tau$-match in $\sg$ 
containing the element $i$, $i$ can only play the role of 1 in that $\tau$-match.  Thus, there 
could not  be a $\tau$-match containing $\sg_{j-1}$ and $\sg_j$.

\begin{figure}[h]
  \begin{center}
   \includegraphics[width=0.25\textwidth]{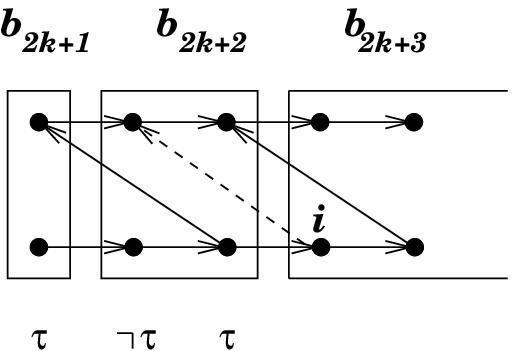}
   \caption{$i$ starts brick $b_{2k+3}$.}
   \label{fig:142536BB}
  \end{center}
\end{figure}

Next, consider the possible $j$ such that $\sg_j = 6k+6$.  It cannot 
be that $j > 6k+7$, because then 
it must be the case that $\sg_{j-1} > \sg_j$ so that $\sg_{j-1}$ and 
$\sg_j$ must be part of a $\tau$-match in $\sg$. But in this situation, the 
elements $1, \ldots, 6k+5$ lie in cells that are more than 2 cells away 
from the cell containing $6k+6$.  This means that in any $\tau$-match containing the 
element $6k+6$ in $\sg$, $6k+6$ can only play the role of 1 in that $\tau$-match.  Thus there 
could not be a  $\tau$-match in $\sg$ containing $\sg_{j-1}$ and $\sg_j$. It follows 
that $6k+6 = \sg_{6k+6}$ or $\sg_{6k+7}$. Let $\alpha$ be the permutation 
that is obtained from $\sg$ by removing the elements $1, \ldots, 6k+4$, setting 
$\alpha_1=1$,  and letting $\alpha_2 \ldots,\alpha_n-(6k+4)$ be the result of 
subtracting $6k+5$ from $\sg_{6k+6} \ldots \sg_n$.  Let $B'$ be the 
brick structure $(b_{2k+2}-2,b_{2k+3},\ldots, b_{k})$. Then it is easy to 
see that $(B',\alpha)$ is a fixed point of $J_{\tau}$ is size 
$n-6k-4$ that starts with a brick of size at least $2$. 

Vice versa, suppose we 
start with a fixed point $(B',\alpha)$ of $J_{\tau}$ whose size 
$n-6k-4$ that starts with a brick of size at least 2 where 
$B' =(d_1,d_2, \ldots, d_s)$. Then we can obtain 
a fixed point $(B,\sg)$ of size $n$ which 
has $\tau$-matches in $\sg$ starting 
at positions $1,3,7,9, \ldots , 6k+1$, but no $\tau$-match 
starting at position $6k+3$, by letting $\sg_1 \ldots \sg_{6k+5}$ be any 
permutation of $1, \ldots, 6k+5$ which is a linear extension of 
the poset whose Hasse diagram is pictured at the bottom of 
Figure \ref{fig:142536B} and letting $\sg_{6k+6} \ldots \sg_n$ be the sequence that 
results by adding  $6k+5$ to each element of $\alpha_2 \ldots \alpha_{n-(6k+4)}$.  We let 
$B=(b_1, \ldots, b_{2k+1}, d_1+2,d_2, \ldots, d_s)$ where $b_{2i+1} =2$ for 
$i =0, \ldots, k$ and $b_{2k}=4$ for $i =1, \ldots, k$.

Note that for any $n$, our arguments above show that 
the only fixed points $(D,\gamma)$ of $J_{\tau}$ of size $n$ where $D=(d_1, \ldots, 
d_k)$ and $\sg = \sg_1 \ldots \sg_n$ which do not start with a brick of size at least $2$ are 
the ones that start with a brick $b_1=1$ where $\sg_1 =1$ and $\sg_2 =2$. Clearly 
such fixed points are counted by $-yU_{n-1,y}$ because $d_1$ would have weight $-y$ 
and $((d_2, \ldots, d_k),(\sg_2-1) (\sg_3-1) \ldots (\sg_n-1))$ could be any 
fixed point of $J_\tau$ of size $n-1$. It follows that sum of the weights of 
all fixed points of $J_\tau$ of size $n$ which start with a brick of size at least 2 is 
equal to 
$$U_{\tau,n}-(-yU_{n-1,\tau}) =U_{\tau,n}+yU_{n-1,\tau}.$$

It follows that contribution to $U_{\tau,n}$ from the fixed points in Case 2.2 equal 
$$-\sum_{k=0}^{\lfloor \frac{n-6}{6} \rfloor} 
G_{6k+4}y^{3k+2}(U_{\tau,n-6k-4}+yU_{\tau,n-6k-5}),$$
where $G_{6k+4}$ is the number of linear extensions of the poset pictured 
at the bottom of Figure \ref{fig:142536B} of size $6k+4$.

Next we want to compute the number of linear extensions of $G_{6k+4}$. 
It is easy to see that the left-most two elements at the bottom of 
the Hasse diagram of $G_{6k+4}$ must be first two elements of the linear 
extension. Thus the number of linear extensions of $\bar{G}_{6k+2}$ which 
is the Hasse diagram of  $G_{6k+4}$ with those two elements removed, equals 
the number of linear extension of $G_{6k+4}$. 
We have pictured the Hasse diagrams of $\bar{G}_2$, $\bar{G}_{8}$ and $\bar{G}_{14}$ in 
Figure \ref{fig:barG2}.

\begin{figure}[h]
  \begin{center}
   \includegraphics[width=0.40\textwidth]{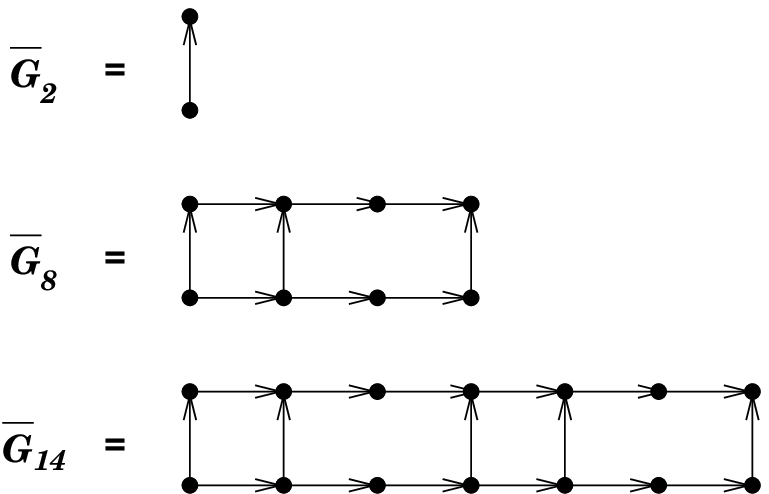}
   \caption{The Hasse diagram of $\bar{G}_{6k+2}$ for $k=0,1,2$.}
   \label{fig:barG2}
  \end{center}
\end{figure}

Now let $B_0=1$ and 
$B_{k+1}$ be the number of linear extensions of $\bar{G}_{6k+2}$ for $k \geq 0$. 
It is easy to see that $B_1 =1$.  Again there is a natural recursion satisfied 
by the $B_k$s, namely, for $k > 1$, 
\begin{equation}\label{Brec}
B_{k+1}=  C_{3k+1} +\sum_{j=0}^{k-1} C_{2+3j} B_{k-j-1},
\end{equation}
where $C_n = \frac{1}{n+1}\binom{2n}{n}$ is the $n$-th Catalan number. 

As in the case of the posets $\bar{G}_{6k+4}$, 
there is no relations that is forced between the elements of 
the elements in columns $3i$ for $i =1, \ldots, k$.  Now suppose 
that we partition the set of linear extensions of  $\bar{G}_{6k+2}$ by 
saying the bottom element in column 3i is less than the top element 
in column $3i$ for $i =1, \ldots, j$ and the top element of column 
$3j+3$ is less than the bottom elements of column $3j+3$.  First if $j=k$, 
then we will have a copy of $D_{3k+1}$ which gives a contribution 
of $C_{3k+1}$ to the number of linear extensions of $\bar{G}_{6k+4}$. 
If $j < k$, then we will have a situation as 
pictured in Figure \ref{fig:partition2} in the case where $k =6$ and $j=2$. 
One can see that when one straightens out the resulting Hasse diagram, one obtains a diagram that 
starts with the Hasse diagram of $D_{2+3j}$ and all those elements must be 
less than the elements in the top part of Hasse diagram which is a copy of 
the Hasse diagram of $\bar{G}_{6(k-j-1)+2}$.

\begin{figure}[h]
  \begin{center}
   \includegraphics[width=0.40\textwidth]{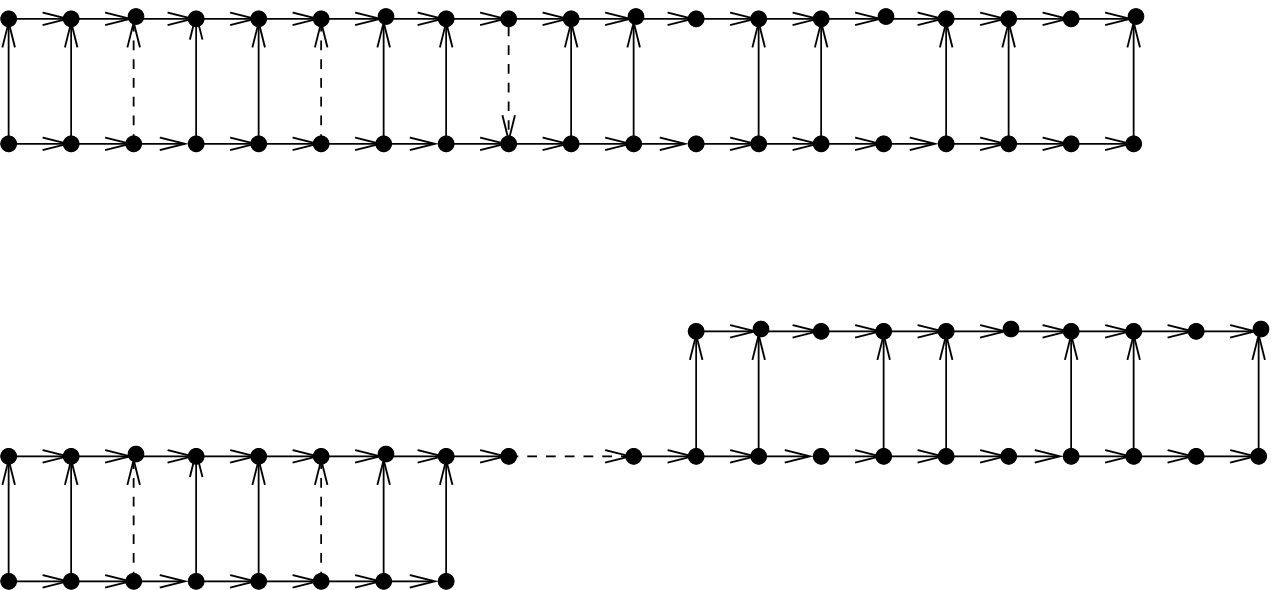}
   \caption{Partitioning the Hasse Diagram of $\bar{G}_{6k+2}$.}
   \label{fig:partition2}
  \end{center}
\end{figure}

Let $P_n$ be the matrix that is obtained from the matrix $M_n$ by 
replacing the elements $C_m$ in the last 
column by $C_{m-1}$. For example we have pictured 
in $P_7$ in Figure \ref{fig:matrix2}. It is then easy to see that 
$\mathrm{det}(P_1) = 1$. For $n >1$ if we expand the determinant by minors 
about the first row, then we see that we have the recursion 

\begin{equation}\label{Arec}
\mathrm{det}(P_k)= C_{3k-2}+\sum_{j=0}^{k-2} C_{2+3j} \mathrm{det}(P_{k-j-1}),
\end{equation}
where we set $\mathrm{det}(P_0)=1$.

For example, suppose that we expand the determinant $P_7$ pictured 
in Figure \ref{fig:matrix2} about the element of 
$C_{19}$ in the first row. Then in the next five rows, we would be forced to 
expand about the $-1$'s.  It is easy to see that the total sign of 
these expansion is always $+1$ so that in this case, we would get a 
contribution of 
$C_{19}$ to the $\mathrm{det}(P_7)$. Expanding the determinant about the 
other elements in the first row gives the remaining terms of the recursion 
just like it did in the expansion of the determinant of $M_n$.

\begin{figure}[h]
  \begin{center}
   \includegraphics[width=0.25\textwidth]{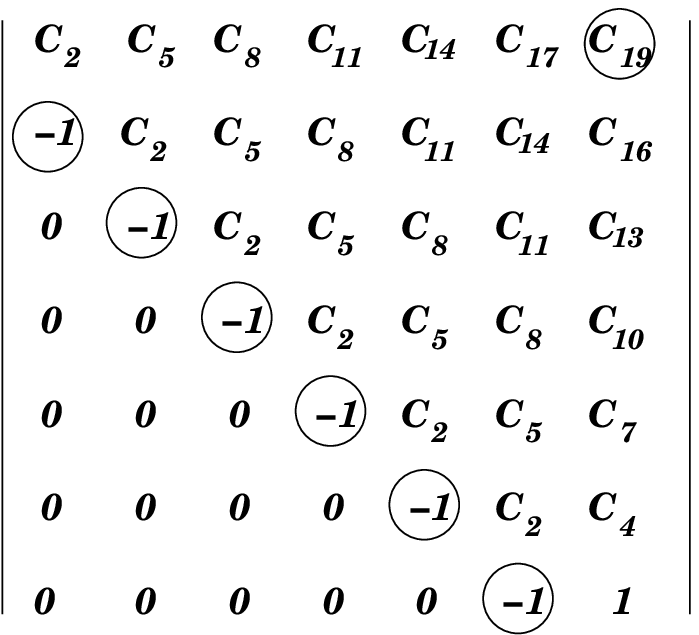}
   \caption{The matrix $P_7$.}
   \label{fig:matrix2}
  \end{center}
\end{figure}

Thus it follows that $B_n = \mathrm{det}(P_n)$ for all $n$.

Hence the contribution of fixed points of $J_\tau$ to $U_{\tau,n}(y)$ 
in the Case 2.2 equals 
$$-\sum_{k=0}^{\lfloor \frac{n-6}{6} \rfloor} 
\mathrm{det}(P_{k+1}) y^{3k+2}(U_{\tau,n-6k-4}+yU_{\tau,n-6k-5}).$$

Therefore, we obtain the recursion for $U_{\tau,n}(y)$ for $\tau = 142536$ is as follows.
\begin{align*}
\displaystyle U_{\tau,n}(y) = &~~~  (1-y)U_{\tau,n-1}(y) +  
\sum_{k=0}^{\lfloor(n-8)/6 \rfloor}
\mathrm{det}(M_{k+1}) y^{3k+3} U_{\tau,n-6k-7}(y) \\
& ~~~\qquad  - \sum_{k=0}^{\Floor[(n-6)/6]} \mathrm{det}(P_{k+1})y^{3k+2}
\left[U_{\tau,n-6k-4}(y) +yU_{\tau,n-6k-5}(y) \right].
\end{align*}

In Table \ref{tab:U142536}, we computed $U_{142536,n}(y)$ for $n \leq 14$.  

\begin{table}[ht]
\begin{center}
\begin{tabular}{c|l}
n & $U_{142536,n}(y)$ \\ 
\hline 1 & $-y$\\
2 & $-y + y^2$ \\
3 & $-y + 2y^2 - y^3$\\
4 & $-y + 3 y^2 - 3 y^3 + y^4$\\
5 & $-y + 4 y^2 - 6 y^3 + 4 y^4 - y^5$\\
6 & $-y + 5y^2 - 9y^3 + 10y^4 -5y^5 + y^6$ \\ 
7 & $-y + 6 y^2 - 13 y^3 + 18 y^4 - 15 y^5 + 6 y^6 - y^7$ \\
8 & $-y + 7 y^2 - 18 y^3 + 27 y^4 - 32 y^5 + 21 y^6 - 7 y^7 + y^8$ \\ 
9 & $-y + 8 y^2 - 24 y^3 + 40 y^4 - 54 y^5 + 52 y^6 - 28 y^7 + 8 y^8 - y^9$  \\ 
10 & $-y + 9 y^2 - 31 y^3 + 58 y^4 - 85 y^5 + 100 y^6 - 79 y^7 + 36 y^8 - 9 y^9 + y^{10}$  \\ 
11 & $-y + 10 y^2 - 39 y^3 +82 y^4 - 129 y^5 + 170 y^6 - 172 y^7 + 
 114 y^8 - 45 y^9 + 10y^{10} - y^{11}$  \\ 
12 & $-y + 11 y^2 - 48 y^3 + 113 y^4 - 191 y^5 + 289 y^6 - 320 y^7 + 278 y^8$ \\
 &\qquad \qquad \qquad \qquad \qquad \qquad \qquad  \qquad \qquad $- 158 y^9 + 55 y^{10} - 11 y^{11} + y^{12}   $\\
13 & $-y + 12 y^2 - 58 y^3 + 152 y^4 - 277 y^5 + 456 y^6 - 578 y^7 + 568 y^8 - 427 y^9$\\
 & \qquad \qquad \qquad \qquad \qquad  \qquad \qquad \qquad $  +212 y^{10} - 66 y^{11} + 12 y^{12} - y^{13}$\\
14 & $-y + 13 y^2 - 69 y^3 + 200 y^4 - 394 y^5 + 689 y^6 - 1031 y^7 + 
 1068 y^8 + 956 y^9  $  \\
  & \qquad \qquad \qquad  \qquad \qquad \qquad \qquad $+ 629 y^{10} - 277 y^{11} + 78 y^{12} - 13 y^{13} + y^{14}  $ 
\end{tabular} 
\end{center}
\caption{The polynomials $U_{\tau,n}(y)$ for $\tau = 142536.$} \label{tab:U142536}
\end{table}

\section{The proof of Theorem \ref{thm:162534}  }\label{sec:162534}

Let $\tau_a = \tau = \tau_1 \ldots, \tau_{2a}$ where 
$\tau_1, \tau_3, \ldots, \tau_{2a-1} = 12 \ldots a$ and 
$\tau_2 \tau_4 \ldots\tau_{2a} = (2a) (2a-1) \ldots (a+1)$. If we picture 
$\tau_a$ in a 2-line array like we did in the last section, then we will get a diagram 
as pictured in Figure \ref{fig:2a}

\begin{figure}[h]
  \begin{center}
   \includegraphics[width=0.40\textwidth]{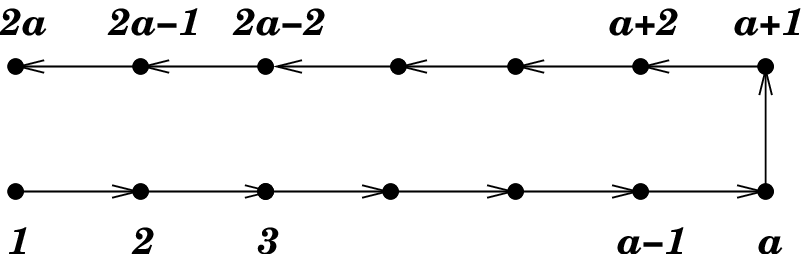}
   \caption{The Hasse diagram associated with $\tau_a$.}
   \label{fig:2a}
  \end{center}
\end{figure}

The key property that $\tau_a$ has is that if $\sg = \sg_1 \ldots \sg_{2m}$ is 
permutation where we have marked some of the $\tau_a$-matches 
by placing an $x$ at the start of a $\tau$ so that every element 
of $\sg$ is contained in some $\tau_a$-match and any two consecutive marked 
$\tau_a$ in $\sg$ share at least one element, then it must 
be the case that 
$\sg_1\sg_3 \ldots \sg_{2m-1} = 12 \ldots m$ and $\sg_2\sg_4 \ldots \sg_{2m} = 
(2m) (2m-1) \ldots (m+1)$. That is, it must be the case that 
$\sg = \tau_{m}$.  This can easily be seen from the picture of 
overlapping $\tau_a$-matches like the one pictured in 
Figure \ref{fig:2acluster} where $a =4$ and $m=12$. Note that 
in such a situation, we will in fact have $\tau_a$ matches starting 
at positions $1,3,5, \ldots, 2(m-a)+2$ in $\sg$.

\begin{figure}[h]
  \begin{center}
   \includegraphics[width=0.40\textwidth]{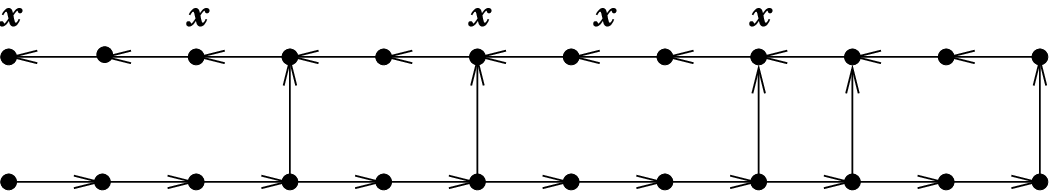}
   \caption{The Hasse diagram of overlapping $\tau_a$-mathces.}
   \label{fig:2acluster}
  \end{center}
\end{figure}

We need to show that the polynomials $$U_{\tau_a,n }(y) = \sum_{O \in \mathcal{O}_{\tau_a,n}, J_{\tau_a}(O) =O} \sgn{O} W(O)$$ satisfy the following properties: \begin{enumerate}
\item $U_{\tau,1}(y)=-y$, and 
\item for $n \geq 2,$ 
\begin{align*}
U_{\tau,n}(y)  = & ~  (1-y)U_{\tau,n-1}(y)  - 
\sum_{k=0}^{\lfloor (n-2a)/(2a)\rfloor} \binom{n-(k+1)a-1}{(k+1)a-1} y^{(k+1)a-1}
U_{\tau_a,n-(2(k+1)a)+1}(y) \\
& \qquad \quad +\sum_{k=0}^{\lfloor (n-2a-2)/(2a)\rfloor} \binom{n-(k+1)a-2}{(k+1)a} 
y^{(k+1)a} U_{\tau_a,n-(2(k+1)a)-1}(y).     
\end{align*}
\end{enumerate}  

Again, it is easy to see that when $n = 1, U_{\tau_a,1}(y)=-y.$ For $n \geq 2,$ let $O=(B,\sg)$ be a fixed point of $J_{\tau_{a}}$ where $B=(b_1, \ldots, b_t)$ and $\sg=\sg_1 \cdots \sg_n$. By the same argument as the previous sections, it must be the case that 1 is in the first cell of $O$ and 2 must be in either cell of 2 or cell 3 in $O.$ Thus, we now have two cases. \\
\ \\
{\bf Case 1.} 2 is in cell 2 of $O$.\\
\ \\
Similar to Case 1 in the proof of Theorem \ref{thm:142536}, there are two possibilities, namely, either (i) 1 and 2 are both in the first brick $b_1$ of $(B,\sg)$ or (ii) brick $b_1$ is a single cell filled with 1 and 2 is in the first cell of the second brick $b_2$ of $O$.  In either case, we can remove cell 1 from $O$ and subtract $1$ from the elements in the remaining cells, we will obtain a fixed point $O'$ of $J_{\tau_a}$ in $\mathcal{O}_{\tau_a,n-1}.$ So the fixed points in this case will contribute $(1-y)U_{\tau_a,n-1}(y)$ to  $U_{\tau_a,n}(y)$.\\
\ \\
{\bf Case 2.} 2 is in cell 3 of $O=(B,\sg)$. \\
\ \\
In this case, $\sg_2 > \sg_3 =2$. Since $\sg$ must be increasing in $b_1$, it 
follows that 2 is in the first cell of brick $b_2$ and there must be a $\tau_a$ 
match in the cells of $b_1$ and $b_2$ which can only start at cell 1. 
Thus it must be the case that brick $b_2$ has at least $2a-2$ cells. 

Again, we shall think of $O=(B,\sg)$ as a two line array $A(0)$ where 
column $i$ consists of $\sg_{2i-1}$ and $\sg_{2i}$, reading from bottom 
to top. Now imagine that $A(0)$ starts with series of $\tau$-matches starting 
at positions $1,3,5, \ldots$. Our observation above shows that if this sequence 
of consecutive $\tau_a$-matches covers cells $1, \ldots ,2k$ for some 
$k$, then in the two line array $A(O)$, all in entries in the first 
row of the first $k$ columns are less than all the entries in top row 
of the first $k$ columns, the cells in the bottom row of the first 
$k$ columns are increasing, reading from left to right, and the cells 
in top row are increasing, reading from right to left.

Next we consider the possible brick structures of $O=(B,\sg)$. We claim 
that we are in one of two subcases:  Subcase (2.A) where there is a  $k \geq 0$ such 
that there are $\tau_a$-matches in $\sg$ starting at cells 
$1, 3, 2a+1, 2a+3, \ldots ,2(k-1)a+1,2(k-1)a+3,2ka+1$, there is no $\tau_a$-match
in $\sg$ starting at cell $2ka+3$, $2 =b_1 =b_3 = \cdots = b_{2k-1}$, 
$2a-2 = b_2 =b_4 = \cdots = b_{2k}$, and $b_{2k+1} =2$ and $b_{2k+2} \geq 2a-2$ or 
Subcase (2.B) where there is a  $k \geq 0$ such 
that there are $\tau_a$-matches in $\sg$ starting at cells 
$1, 3, 2a+1, 2a+3, \ldots ,2(k-1)a+1,2(k-1)a+3,2ka+1,2ka+3$,  there is no $\tau_a$-match
in $\sg$ starting at cell $2(k+1)a+1$, $2 =b_1 =b_3 = \cdots = b_{2k-1}=b_{2k+1}$, 
$2a-2 = b_2 =b_4 = \cdots = b_{2k+2}$, and $b_{2k+3} \geq 2$. Subcase (2.A) is pictured 
at the top of Figure \ref{fig:2taua} and Subcase (2.B) is pictured at the 
bottom of Figure \ref{fig:2taua} in the case where $a=4$ and $k=2$. 
Note that by our remarks above, we also 
know the relative order of the elements involved in these $\tau_a$-matches in $\sg$ 
which is indicated by the poset whose Hasse diagram is pictured in Figure \ref{fig:2taua}.
We can prove this by induction. That is, suppose $k =0$ and we are in Subcase (2.A). 
Then there is a $\tau_a$-match in $\sg$ 
starting a cell 1 but no $\tau_a$-match in $\sg$ starting at cell 3. 
Our argument above shows that $b_1=2$ and $b_2 \geq 2a-2$. 
Next suppose that $k =0$ and we are in Subcase (2.B)
so that there are $\tau_a$-matches in $\sg$ starting in cells 1 and 3 but there 
is no $\tau_a$-match in $\sg$ starting at cell $2a+1$. Then we claim 
we claim that $b_2 =2a-2$.  That is, in such a situation 
we would know that $\sg_{2a} > \sg_{2a+1}$. Thus, 
if $b_2 > 2a-2$, then $2a$ would be labeled with a $y$. The $\tau_{a}$-match 
starting at cell 1 ends at cell $2a$ so that cell $2a$ would satisfy 
Case I of our definition of $J_{\tau_a}$ which 
contracts that the fact that  $O=(B,\sg)$ is a fixed point of $J_{\tau_a}$.  
Thus, brick $b_3$ must start at cell $2a+1$. 
Now the fact that $\sg_{2a} > \sg_{2a+1}$ implies that $b_3 \geq 2$ since 
there must be a $\tau_{a}$-match that involves $\sg_{2a}$ and $\sg_{2a+1}$ and lies in cells of $b_2$ and $b_3$.

\begin{figure}[h]
  \begin{center}
   \includegraphics[width=0.40\textwidth]{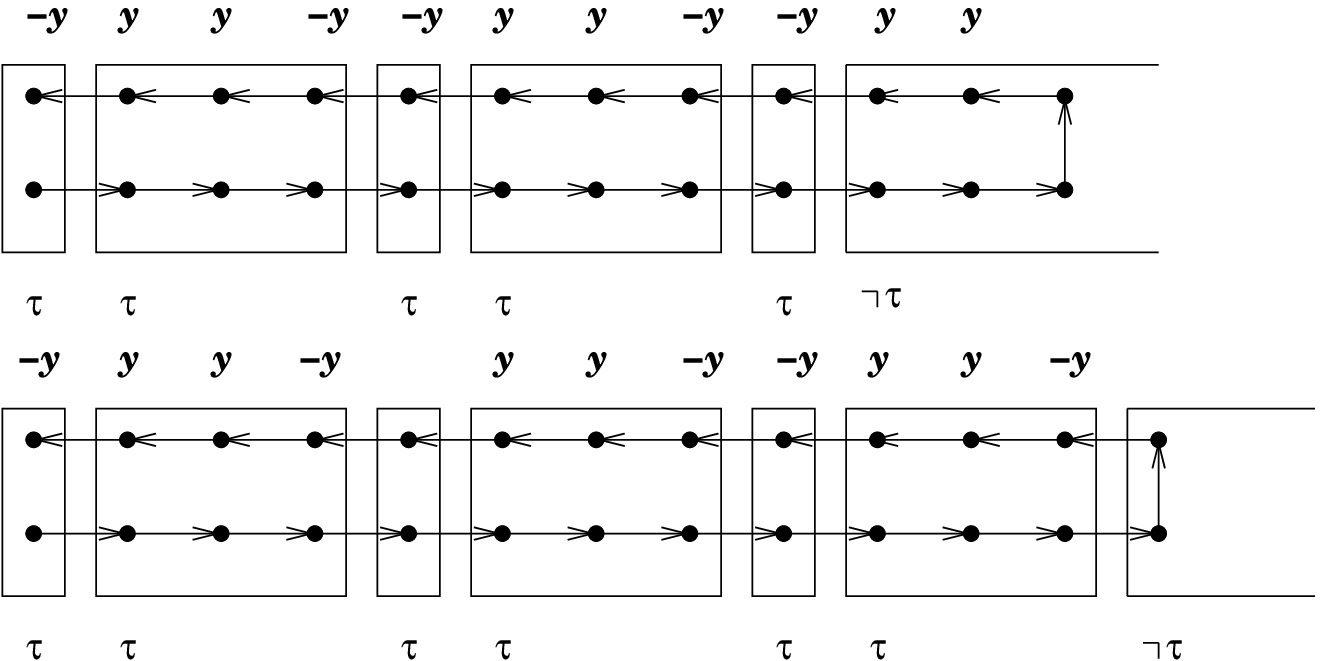}
   \caption{Subcases (2.A) and (2.B).}
   \label{fig:2taua}
  \end{center}
\end{figure}

Now assume by induction that for $k \geq 1$, 
there are $\tau_a$-matches in $\sg$ starting at cells 
$1, 3, 2a+1, 2a+3, \ldots ,2(k-1)a+1,2(k-1)a+3$, $2 =b_1 =b_3 = \cdots = b_{2k-1}$,
$2a-2 = b_2 =b_4 = \cdots = b_{2k-2}$, and $b_{2k} \geq 2a-2$. Suppose we are in Subcase (2.A) so that there is $\tau_a$-match 
starting at cell $2ka+1$ but there is no $\tau_a$ starting at cell $2ka+3$.  
Then we know that $\sg_{2ka} > \sg_{2ka+1}$ due to the $\tau_a$-match 
in $\sg$ starting at cell $2(k-1)a+1$. It cannot be the case that $b_{2k} > 2a-2$ since 
then cells $2ka$ and $2ka+1$ are contained in brick $b_{2k}$ so that cell $2ka$ would 
be marked with a $y$.  However, the $\tau_a$-match staring at cell $2(k-1)a+1$, which is 
the first cell of $b_{2k}$, ends at cell $2ka$ so that cell $2ka$ would 
satisfy Case I of our definition of $J_{\tau_a}$ which violates our assumption 
that $(B,\sg)$ is a fixed point of $J_{\tau_a}$.  This means 
that $b_{2k}=2a-2$ and $b_{2k+1}$ starts at cell $2ka+1$. Since $\sg_{2ak} > \sg_{2ak+1}$ 
due to the $\tau_a$-match in $\sg$ starting at cell $2(k-1)a+3$, we know 
that there must be a $\tau_a$-match contained in the cells of $b_{2k}$ and $b_{2k+1}$ 
so that $b_{2k+1} \geq 2$. But then because of the $\tau_a$-match in $\sg$ starting 
at cell $2ka+1$, we know that $\sg_{2ka+2} > \sg_{2ka+3}$. It cannot be 
that cell $2ka+3$ is in brick $b_{2k+1}$ because then cell $2k+2$ would be marked 
with a $y$ and there is a $\tau_a$-match in $\sg$ starting at cell $2(k-1)a+3$ which 
ends at cell $2k+2$ which is contained in the bricks $b_{2k}$ and $b_{2k+1}$ which 
means that cell $2ka+2$ would 
satisfy Case 1 of our definition of $J_{\tau_a}$ which violates our assumption 
that $(B,\sg)$ is a fixed point of $J_{\tau_a}$. Thus it must be the case 
that $b_{2k+1} =2$ and brick $b_{2k+2}$ starts at cell $2ka+3$. But this 
means that there must be a $\tau_a$-match in $\sg$ contained in the cells 
of $b_{2k+1}$ and $b_{2k+2}$ so that $b_{2k+2} \geq 2a-2$. 
Now if there is also a $\tau_a$-match in $\sg$ starting at cell $2ka+3$, 
then we claim that $b_{2k+2}=2a-2$.  That is, we know 
that $\sg_{2(k+1)a} > \sg_{2(k+1)a+1}$. It cannot be that  $b_{2k+2} > 2a-2$ because 
then cell $2(k+1)a$ would be labeled with a $y$ and the $\tau_a$-match in $\sg$ starting 
at cell $2ka+1$ ends at cell $2(k+1)a$ and is contained in the bricks 
$b_{2k+1}$ and $b_{2k+2}$ so that cell $2(k+1)a$ would satisfy Case 1 of our 
definition of $J_{\tau_a}$ which would violate our assumption that 
$(B,\sg)$ is fixed point of $J_{\tau_a}$.  Thus $b_{2k+2}=2a-2$. But 
then due to the $\tau_a$-match in $\sg$ starting at cell 
$2(k+1)a+3$, we know that $\sg_{2(k+1)a} > \sg_{2(k+1)a+1}$ which means 
that there must be a $\tau_a$ match contained in bricks $b_{2k+2}$ and $b_{2k+3}$. 
This means that $b_{2k+3} \geq 2$.

Thus we have two cases to consider. \\
\ \\
{\bf Subcase (2.A)} There is a  $k \geq 0$ such 
that there are $\tau_a$-matches in $\sg$ starting at cells 
$1, 3, 2a+1, 2a+3, \ldots ,2(k-1)a+1,2(k-1)a+3,2ka+1$, there is no $\tau_a$-match
in $\sg$ starting at cell $2ka+3$, $2 =b_1 =b_3 = \cdots = b_{2k-1}$, 
$2a-2 = b_2 =b_4 = \cdots = b_{2k}$, and $b_{2k+1} =2$ and $b_{2k+2} \geq 2a-2$.

In this case, we claim that $\{1, \ldots, (k+1)a+1\} = \{\sg_1, \sg_3, \ldots, \sg_{2(k+1)a-1},\sg_{2(k+1)a}\}$.  That is, if one considers the diagram at the top of 
Figure \ref{fig:2taua}, then the elements in the bottom row are 
$1,2, \ldots, (k+1)a$, reading from left to right, and the element 
at the top of column $(k+1)a$ is equal to $(k+1)a+1$. 
If this is not the case, then let 
$$i = \min(\{1, \ldots, (k+1)a+1\} - \{\sg_1, \sg_3, \ldots, \sg_{2(k+1)a-1},\sg_{2(k+1)a}\}).$$This means $\sg_{2(k+1)a} > i$ and, hence one can see by the relative order of the elements 
in the first $(k+1)a$ columns of $A(O)$ that $i$ can not lie in the first 
$(k+1)a$ columns. Then the question is for what $j$ is $\sg_j=i$.  First we claim that 
it  cannot be that $\sg_{2(k+1)a+1} =i$. That is, in such a  situation,
$\sg_{2(k+1)a}> \sg_{2(k+1)a+1}$.  Now it cannot be that 
$\sg_{2(k+1)a}$ and $\sg_{2(k+1)a+1}$ lie in brick $b_{2k+2}$ because 
then the $\tau_a$-match in $\sg$ that starts in 
the first cell of $b_{2k+1}$ ends at cell $2(k+1)a$ which means that 
cell $2(k+1)a$ would be labeled with a $y$ and satisfy Case I of our 
definition of $J_{\tau_a}$ which would violate our assumption that 
$(B,\sg)$ is fixed point of $J_{\tau_a}$. Thus it must be the case 
that brick $b_{2k+3}$ starts at cell $2(k+1)a+1$. But then there 
must be a $\tau_a$-match in $\sg$ contained in the cells of bricks 
$b_{2k+2}$ and $b_{2k+3}$ which would imply that there is a $\tau_a$-match 
in $\sg$ starting at cell  $2ka+3$ which violates our assumption in this case. 
Hence $j > 2(k+1)a+1$ which implies  
that both $\sg_{j-2}$ and $\sg_{j-1}$ are greater than $\sg_j =i$. But then 
there could be no $\tau_a$-match in $\sg$ which contains both 
$\sg_{j-1}$ and $\sg_j$ because the only role that $i$ could play in 
$\tau_a$-match in $\sg$ would be 1 under those circumstances. 

It follows that if we remove the elements in $A(0)$ from the first 
$(k+1)a-1$ columns plus the bottom element of column $(k+1)a$, then 
$(B',\sg')$, where $B'=(b_{2k+2}-(2a-1),b_{2k+3}, \ldots, b_t)$ 
and $\sg' = \mathrm{red}(\sg_{2(k+1)a} \ldots \sg_n)$, will 
be a fixed point of $J_{\tau_a}$ of size $n-(2(k+1)a)+1$.  Note that 
in such a situation, we will have 
$\binom{n-(k+1)a-1}{(k+1)a-1}$ ways to choose the elements of 
that lie in the top rows of the first $(k+1)a-1$ columns of $A(O)$. 
Note that the powers of $y$ coming from the bricks $b_1, \ldots, b_{2k}$ is 
$y^{ka}$ and the powers of $y$ coming from bricks $b_{2k+1}$ and $b_{2k+2}$ 
is $-y^{a-1}$. It follows 
that the elements in Subcase (2.A) contribute 
$$-\sum_{k=0}^{\lfloor (n-2a)/(2a)\rfloor} \binom{n-(k+1)a-1}{(k+1)a-1} 
y^{(k+1)a-1} U_{\tau_a,n-(2(k+1)a)+1}(y)$$
to $U_{\tau_a,n}(y)$. \\
\ \\
{\bf Subcase (2.B).} There is a  $k \geq 0$ such 
that there are $\tau_a$-matches in $\sg$ starting at cells 
$1, 3, 2a+1, 2a+3, \ldots ,2(k-1)a+1,2(k-1)a+3,2ka+1,2ka+3$, there is no $\tau_a$-match
in $\sg$ starting at cell $2(k+1)a+1$, $2 =b_1 =b_3 = \cdots = b_{2k-1}=b_{2k+1}$, 
$2a-2 = b_2 =b_4 = \cdots = b_{2k+2}$, and $b_{2k+3} \geq 2$.

In this case, we claim that $\{1, \ldots, (k+1)a+2\} = \{\sg_1, \sg_3, \ldots, \sg_{2(k+1)a+1},\sg_{2(k+1)a+2}\}$.  That is, if one considers the diagram at the bottom of 
Figure \ref{fig:2taua}, then the elements in the bottom row are 
$1,2, \ldots, (k+1)a+1$, reading from left to right, and the element 
at the top of column $(k+1)a+1$ is equal to $(k+1)a+2$. 
If this is not the case, then let 
$$i = \min(\{1, \ldots, (k+1)a+2\} - \{\sg_1, \sg_3, \ldots, \sg_{2(k+1)a+1},\sg_{2(k+1)a+2}\}).$$
This means $\sg_{2(k+1)a+2} > i$ and, hence one can see by the relative order of the elements 
in the first $(k+1)a+1$ columns of $A(O)$ that $i$ can not lie in the first 
$(k+1)a+1$ columns. Then the question is for what $j$ is $\sg_j=i$.  First we claim that 
it  cannot be that $\sg_{2(k+1)a+3} =i$. That is, in such as situation,
$\sg_{2(k+1)a+2}> \sg_{2(k+1)a+3}$.  Now it cannot be that 
$\sg_{2(k+1)a+2}$ and $\sg_{2(k+1)a+3}$ lie in brick $b_{2k+3}$ because 
then the $\tau_a$-match in $\sg$ that starts in 
the first cell of $b_{2k+2}$ ends at cell $2(k+1)a+2$ which means that 
cell $2(k+1)a+2$ would be labeled with a $y$ and satisfy Case I of our 
definition of $J_{\tau_a}$ which would violate our assumption that 
$(B,\sg)$ is fixed point of $J_{\tau_a}$. Thus it must be the case $b_{2k+3}=2$ 
that brick $b_{2k+4}$ starts at cell $2(k+1)a+3$. But then there 
must be a $\tau_a$-match in $\sg$ contained in the cells of bricks 
$b_{2k+3}$ and $b_{2k+4}$ which would imply that there is a $\tau_a$-match 
in $\sg$ starting at cell  $2(k+1)a+1$ which violates our assumption in this case. 
Hence $j > 2(k+1)a+3$ which implies  
that both $\sg_{j-2}$ and $\sg_{j-1}$ are greater than $\sg_j =i$. But then 
there could be no $\tau_a$-match in $\sg$ which contains both 
$\sg_{j-1}$ and $\sg_j$ because the only role that $i$ could play in 
$\tau_a$-match in $\sg$ would be 1 under those circumstances. 

It follows that if we remove the elements in $A(0)$ from the first 
$(k+1)a+1$ columns plus the bottom element of column $(k+1)a+2$, then 
$(B',\sg')$, where $B'=(b_{2k+3}-1,b_{2k+4}, \ldots, b_t)$ 
and $\sg' = \mathrm{red}(\sg_{2(k+1)a+2} \ldots \sg_n$, will 
be a fixed point of $J_{\tau_a}$ of size $n-(2(k+1)a)-1$.  Note that 
in such a situation, we will have 
$\binom{n-(k+1)a-2}{(k+1)a}$ ways to choose the elements of 
that lie in the top rows of the first $(k+1)a-1$ columns of $A(O)$. 
Note that the powers of $y$ coming from the bricks $b_1, \ldots, b_{2k_2}$ is 
$y^{(k+1)a}$.
It follows 
that the elements in Subcase (2.B) contribute 
$$\sum_{k=0}^{\lfloor (n-2a-2)/(2a)\rfloor} \binom{n-(k+1)a-2}{(k+1)a} 
y^{(k+1)a}U_{\tau_a,n-(2(k+1)a)-1}(y)$$
to $U_{\tau_a,n}(y)$. \\
\ \\
Therefore, the recursion for the polynomials $U_{\tau,n}(y)$ is given by 
\begin{align*}
U_{\tau,n}(y)  = & ~  (1-y)U_{\tau,n-1}(y)  - 
\sum_{k=0}^{\lfloor (n-2a)/(2a)\rfloor} \binom{n-(k+1)a-1}{(k+1)a-1} y^{(k+1)a-1}
U_{\tau_a,n-(2(k+1)a)+1}(y) \\
& \qquad \quad +\sum_{k=0}^{\lfloor (n-2a-2)/(2a)\rfloor} \binom{n-(k+1)a-2}{(k+1)a} 
y^{(k+1)a} U_{\tau_a,n-(2(k+1)a)-1}(y).     
\end{align*}
This concludes the proof of Theorem \ref{thm:162534}. \qed



\end{document}